\begin{document}
\newtheorem*{thmC}{Theorem C}
\newtheorem*{thmA}{Theorem A}
\newtheorem*{thmB}{Theorem B}
% define theorem environments
\newtheorem{theorem}{Theorem}    %[section]
\newtheorem{proposition}[theorem]{Proposition}
\newtheorem{conjecture}[theorem]{Conjecture}
\def\theconjecture{\unskip}
\newtheorem{corollary}[theorem]{Corollary}
\newtheorem{lemma}[theorem]{Lemma}
\newtheorem{sublemma}[theorem]{Sublemma}
\newtheorem{observation}[theorem]{Observation}
\theoremstyle{definition}
\newtheorem{definition}{Definition}
\newtheorem{notation}[definition]{Notation}
\newtheorem{remark}[definition]{Remark}
\newtheorem{question}[definition]{Question}
\newtheorem{questions}[definition]{Questions}
\newtheorem{example}[definition]{Example}
\newtheorem{problem}[definition]{Problem}
\newtheorem{exercise}[definition]{Exercise}

\numberwithin{theorem}{section}
\numberwithin{definition}{section}
\numberwithin{equation}{section}

\def\earrow{{\mathbf e}}
\def\rarrow{{\mathbf r}}
\def\uarrow{{\mathbf u}}
\def\varrow{{\mathbf V}}
\def\tpar{T_{\rm par}}
\def\apar{A_{\rm par}}

\def\reals{{\mathbb R}}
\def\torus{{\mathbb T}}
\def\heis{{\mathbb H}}
\def\integers{{\mathbb Z}}
\def\naturals{{\mathbb N}}
\def\complex{{\mathbb C}\/}
\def\distance{\operatorname{distance}\,}
\def\support{\operatorname{support}\,}
\def\dist{\operatorname{dist}\,}
\def\Span{\operatorname{span}\,}
\def\degree{\operatorname{degree}\,}
\def\kernel{\operatorname{kernel}\,}
\def\dim{\operatorname{dim}\,}
\def\codim{\operatorname{codim}}
\def\trace{\operatorname{trace\,}}
\def\Span{\operatorname{span}\,}
\def\dimension{\operatorname{dimension}\,}
\def\codimension{\operatorname{codimension}\,}
\def\nullspace{\scriptk}
\def\kernel{\operatorname{Ker}}
\def\ZZ{ {\mathbb Z} }
\def\p{\partial}
\def\rp{{ ^{-1} }}
\def\Re{\operatorname{Re\,} }
\def\Im{\operatorname{Im\,} }
\def\ov{\overline}
\def\eps{\varepsilon}
\def\lt{L^2}
\def\diver{\operatorname{div}}
\def\curl{\operatorname{curl}}
\def\etta{\eta}
\newcommand{\norm}[1]{ \|  #1 \|}
\def\expect{\mathbb E}
\def\bull{$\bullet$\ }

\def\xone{x_1}
\def\xtwo{x_2}
\def\xq{x_2+x_1^2}
\newcommand{\abr}[1]{ \langle  #1 \rangle}

\newcommand{\Norm}[1]{ \left\|  #1 \right\| }
\newcommand{\set}[1]{ \left\{ #1 \right\} }
\def\one{\mathbf 1}
\def\whole{\mathbf V}
\newcommand{\modulo}[2]{[#1]_{#2}}

\def\scriptf{{\mathcal F}}
\def\scriptg{{\mathcal G}}
\def\scriptm{{\mathcal M}}
\def\scriptb{{\mathcal B}}
\def\scriptc{{\mathcal C}}
\def\scriptt{{\mathcal T}}
\def\scripti{{\mathcal I}}
\def\scripte{{\mathcal E}}
\def\scriptv{{\mathcal V}}
\def\scriptw{{\mathcal W}}
\def\scriptu{{\mathcal U}}
\def\scriptS{{\mathcal S}}
\def\scripta{{\mathcal A}}
\def\scriptr{{\mathcal R}}
\def\scripto{{\mathcal O}}
\def\scripth{{\mathcal H}}
\def\scriptd{{\mathcal D}}
\def\scriptl{{\mathcal L}}
\def\scriptn{{\mathcal N}}
\def\scriptp{{\mathcal P}}
\def\scriptk{{\mathcal K}}
\def\frakv{{\mathfrak V}}

\begin{comment}
\def\scriptx{{\mathcal X}}
\def\scriptj{{\mathcal J}}
\def\scriptr{{\mathcal R}}
\def\scriptS{{\mathcal S}}
\def\scripta{{\mathcal A}}
\def\scriptk{{\mathcal K}}
\def\scriptp{{\mathcal P}}
\def\frakg{{\mathfrak g}}
\def\frakG{{\mathfrak G}}
\def\boldn{\mathbf N}
\end{comment}

\author{Zhengyang Li}
\address{Zhengyang Li
\\
School of Mathematical Sciences
\\
Beijing Normal University
\\
Laboratory of Mathematics and Complex Systems
\\
Ministry of Education
\\
Beijing 100875
\\
People's Republic of China
}
\email{zhengyli@mail.bnu.edu.cn}

\author{Qingying Xue}
\address{
        Qingying Xue\\
        School of Mathematical Sciences\\
        Beijing Normal University \\
        Laboratory of Mathematics and Complex Systems\\
        Ministry of Education\\
        Beijing 100875\\
        People's Republic of China}
\email{qyxue@bnu.edu.cn}
\thanks{The authors were supported partly by NSFC
(No. 11471041), the Fundamental Research Funds for the Central Universities (NO. 2014kJJCA10) and NCET-13-0065. \\ \indent Corresponding author: Qingying Xue\indent Email: qyxue@bnu.edu.cn}
\keywords{Commutators, Multilinear Calder\'{o}n-Zygmund operator, C-Z kernel of $\omega$ type, Dini type conditions, Hardy spaces.}
\title
[Endpoint estimates for commutatorss of m-linear $\omega$-CZO]
{Endpoint estimates for the commutators of multilinear Calder\'{o}n-Zygmund operators with Dini type kernels}
\maketitle

\begin{abstract}
Let  $T_{\vec{b}}$  and $T_{\Pi b}$ be the  commutators in the $j$-th entry and  iterated commutators of the multilinear Calder\'{o}n-Zygmund operators, respectively. It was well-known that $T_{\vec{b}}$  and $T_{\Pi b}$ were not of weak type $(1,1)$ and $(H^1, L^1)$, but they did satisfy certain endpoint $L\log L$ type estimates. In this paper, our aim is to give more natural sharp endpoint results. We show that $T_{\vec{b}}$  and $T_{\Pi b}$ are bounded from product Hardy space $H^1\times\cdot\cdot\cdot\times H^1$ to weak $L^{\frac{1}{m},\infty}$ space, whenever the kernel satisfies a class of Dini type condition. This was done by using a key lemma given by M. Christ, a very complex decomposition of the integrand domains and splitting and estimating the commutators very carefully into several terms and cases.
\end{abstract}

\section{Introduction}
\subsection{Commutators of classical C-Z operators}

In 1976, Coifman, Rochberg and Weiss \cite{CRW} first introduced and studied the commutator of classical linear Calder\'{o}n-Zygmund singular integrals, which was defined by $$T_b = [b,T]f = bT(f)-T(bf).$$
The $L^p$ boundedness of $T_b$ was given in \cite{CRW} for $1<p<\infty$ when $b\in BMO(\reals^n)$. It is well-known that $T_b$ fails to be of weak type (1,1) and is not bounded from $H^1(\reals^n)$ to $L^1(\reals^n)$. Counterexamples were given by P\'{e}rez \cite{P} and Paluszy\'{n}ski \cite{Pa}. As an alternative result of the weak $(1,1)$ estimate of $T_b$, P\'{e}rez \cite{P} obtained the following $L(\log L)$ type endpoint estimate:
\begin{equation*}
|\{x\in\reals^n:|{T_bf(x)}|>\lambda\}|\leq C\int_{\reals^n}\frac{|f(x)|}{\lambda}\big(1+\log^+(\frac{|f(x)|}{\lambda})\big)dx, \quad \quad \quad\lambda>0.
\end{equation*}
Moreover, alternative results of the $(H^1, L^1)$ boundedness were also considered in the works of Alvarez \cite{A}, P\'{e}rez \cite{P}, and  Liang, Ky and Yang \cite{LKY}, which concerned with the boundedness of $T_b$ on the subspace of atomic Hardy Spaces, or concerned with the $(H_{w}^{1}, L_w^1)$ boundedness of $T_b$ if $b$ belongs to a subspace of $BMO$ which associated to a weight function $w$.

On the other hand, another more reasonable and alternative result of weak type $(1,1)$ and $(H^1, L^1)$ estimate was given by Liu and Lu \cite{LL} in 2002. The authors \cite{LL}  showed that $T_b$ is bounded from $H^1(\reals^n)$ to $L^{1,\infty}(\reals^n)$ if $b\in BMO(\reals^n)$. We note that, $T_b$ also fails to be bounded from $H^{p}(\reals^n)$ to $L^{p,\infty}(\reals^n)$ for $0<p<1$ by the generalized interpolation theorem (\cite[pp. 63]{Lu}). Therefore, the $(H^1, L^{1,\infty})$ boundedness of $T_b$ becomes a sharp endpoint estimate.
Moreover, it always holds that $L(\log L)(\mathbb{S}^{n-1})\subsetneq H^1(\mathbb{S}^{n-1})$ if $f$ vanishes on the unit sphere. However, there is no such inclusion relationship on $\reals^n$. Moreover, the inverse including relationship is still not true since the following example shows that $H^1(\reals^n)\nsubseteq L(\log L)(\reals^n)$.
\begin{example}
 Let \[f(x)=\frac{\chi _{[-\frac{1}{2}, \frac{1}{2}]}}{x\log_2^{1+\varepsilon}\frac{1}{|x|}}\ \ \text{for some $\varepsilon>0$},\]
\[a_j(x)=\frac{f(x)}{f(\frac{1}{2^{j+1}})}\{\chi_{[-\frac{1}{2^j},-\frac{1}{2^{j+1}}]}+\chi_{[\frac{1}{2^{j+1}},\frac{1}{2^{j}}]}\}\times 2^j,\ \ \lambda_j=\frac{f(\frac{1}{2^{j+1}})}{2^j}.\]
Thus, $f(x)=\sum_{j=1}^\infty\lambda_ja_j(x)$, and it is easy to verify that each $a_j$ is a $(1,\infty,0)$-atom. Notice that
\[\sum\limits_{j=1}^\infty|\lambda_j|=\sum\limits_{j=1}^\infty\frac{|f(\frac{1}{2^{j+1}})|}{2^j}\leq\sum\limits_{j=1}^\infty\frac{1}{2^j}
\cdot\frac{1}{\frac{1}{2^{j+1}}\log_2^{1+\varepsilon}2^{j+1}}=2\sum\limits_{j=1}^\infty\frac{1}{(j+1)^{1+\varepsilon}}<\infty,\] then we have $f\in H^1(\reals^n)$. Obviously, $f\notin L(\log L)(\reals^n)$.
\end{example}
Thus,  the $(H^1, L^{1,\infty})$ boundedness and the $L\log L$ type estimate of $T_b$ are independent in the sense that one can not cover the results of the other.\subsection{Commutators of multilinear operators}In recent years, the theory of multilinear Calder\'{o}n-Zygmund operators with standard kernels have been developed very quickly and a lot of works have been done. Among such achievements are the celebrated works of Coifman and Meyer \cite{RM1}, \cite{RM2} , \cite{RM3}, Christ
and Journ\'{e} \cite{Crist}, Kenig and Stein \cite{KS}, Grafakos and Torres \cite{GT1}, \cite{GT2}, and Lerner et al \cite{LOPT}. In order to state some known results, we need to introduce some definitions as follows:
\begin{definition}[\textbf{C-Z kernel of $\omega$ type}, \cite{LZ,MN}]
Let $\omega(t)$ be a non-negative and non-decreasing function on $\mathbb{R}^+$. Let $K(x, y_1, \cdot\cdot\cdot, y_m)$ be a locally integrable function defined away from the diagonal $x= y_1=\cdot\cdot\cdot=y_m$ in $(\reals^n)^{m+1}$. Denote $(x, \vec{y})=(x, y_1, \cdot\cdot\cdot, y_m)$, we say $K$ is an $m$-linear Calder\'{o}n-Zygmund kernel of $\omega$ type, if there exists a positive constants $C_0$ such that
\begin{equation}\label{k-sizes}
|K(x, \vec{y})|\leq\frac{C_0}{(\sum_{j=1}^m|x-y_j|)^{mn}},
\end{equation}
\begin{equation}\label{k-H1s}
|K(x, \vec{y})-K(x', \vec{y})|\leq\frac{C_0}{(\sum_{j=1}^m|x-y_j|)^{mn}}\omega\big(\frac{|x-x'|}{\sum_{j=1}^m|x-y_j|}\big),
\end{equation}
whenever $|x-x'|\leq\frac{1}{2}\max_{1\leq j\leq m}|x-y_j|$, and
\begin{align}\label{k-H2s}
|K(x, y_1, \cdot\cdot\cdot, y_i, \cdot\cdot\cdot, y_m)&-K(x, y_1, \cdot\cdot\cdot, y_i', \cdot\cdot\cdot, y_m)|\\ \notag
&\leq\frac{C_0}{(\sum_{j=1}^m|x-y_j|)^{mn}}\omega\big(\frac{|y_i-y_i'|}{\sum_{j=1}^m|x-y_j|}\big),
\end{align}
whenever $|y_i-y_i'|\leq\frac{1}{2}\max_{1\leq j\leq m}|x-y_j|$.
\end{definition}
\begin{definition}[\textbf{Multilinear C-Z singular integral operators}, \cite{LZ,MN}]\label{CZ}
Let $K(x, \vec{y})$ be a C-Z kernel of $\omega$ type. For any $\vec{f}=(f_1,\cdot\cdot\cdot,f_m)\in \mathscr{S}(\reals^n)\times\mathscr{S}(\reals^n)
\times\cdot\cdot\cdot\times\mathscr{S}(\reals^n)$ and all $x\notin\cap_{j=1}^m$ supp $f_j$, we define the multilinear Calder\'{o}n-Zygmund singular integral operators as follows:
\begin{equation*}\label{operator}
T(\vec{f})(x)=\int_{(\reals^n)^m}K(x, y_1, \cdot\cdot\cdot, y_m)f_1(y_1),\cdot\cdot\cdot, f_m(y_m)dy_1\cdot\cdot\cdot dy_m.
\end{equation*}
\end{definition}

\begin{definition}[\textbf{Commutators of Multilinear C-Z operators}] Let $b_j\in BMO(\reals^n)$ and $T$ be the operator defined in Definition \ref{CZ}. The commutators in the $j$-th entry and the iterated commutators of $T$ are defined by
 \begin{align}\label{jentry}
T_{\vec{b}}(\vec{f})(x)&=\sum_{i=1}^{m}T_{\vec{b}}^j(\vec{f})(x)\\ \notag &=\sum_{i=1}^{m}[b_j(x)T(f_1, \cdot\cdot\cdot, f_j, \cdot\cdot\cdot, f_m)(x)-T(f_1, \cdot\cdot\cdot, b_jf_j, \cdot\cdot\cdot, f_m)(x)]
\end{align}
and
 \begin{align}\label{iterated}
T_{\Pi b}(\vec{f})&=[b_1, [b_2, \cdot\cdot\cdot [b_{m-1},[b_m, T]_m, ]_{m-1}\cdot\cdot\cdot ]_2]_1(\vec{f})\\ \notag
&=\int_{(\reals^n)^m}\prod_{j=1}^{m}\big(b_j(x)-b_j(y_j)\big)K(x, y_1,\cdot\cdot\cdot,y_m)f_1(y_1)\cdot\cdot\cdot f_m(y_m)d\vec{y}.
\end{align}
\end{definition}
\begin{remark}Obviously, in the special case, $\omega(t)=t^\varepsilon$ for some $\varepsilon>0$, then the operator $T$ defined in Definition \ref{CZ} coincides with the standard multilinear Calder\'{o}n-Zygmund operator defined and studied by Grafakos and Torres \cite{GT1}. Moreover, if $\omega(t)=t^\varepsilon$, the weighted strong and $L(\log L)$ type endpoint estimates for $T_{\vec{b}}$ and $T_{\Pi b}$  have already been studied
in \cite{LOPT} and \cite{PPTT}, respectively. \end{remark}
 \begin{definition}[\textbf{Dini$(a)$ type conditions}]
Let $\omega (t)$ be a non-negative and non-decreasing function on $\mathbb{R}^+$. $\omega$ is said to satisfy the Dini$(a)$ condition if
$$\int_0^1\frac{\omega^a(t)}{t}dt<\infty.$$
$\omega$ is said to satisfy the $\log$-Dini$(a)$ condition if the following inequality holds:
\begin{equation}\label{condition1}\int_0^1\frac{\omega^a(t)}{t}\left(1+\log\frac{1}{t}\right)dt<\infty.
\end{equation}\end{definition}

\remark It's easy to see that the $\log$-Dini$(a)$ condition is more stronger than the Dini$(a)$ condition and if $0<a_1<a_2$, then Dini$(a_1)\subset$ Dini$(a_2)$.

In 2009, Maldonado and Naibo \cite{MN} showed that, when $\omega$ is concave and $\omega\in$ Dini$(1/2)$, the bilinear Calder\'{o}n-Zygmund operator of $\omega$ type is bounded from $L^1\times L^1$ to $L^{\frac{1}{2},\infty}$. In 2014, Lu and Zhang \cite{LZ} improved the results in \cite{MN} by removing the hypothesis that $\omega$ is concave and reduce the condition $\omega\in$ Dini$(1/2)$ to a weaker condition $\omega\in$ Dini$(1)$. Lu and Zhang \cite{LZ} also extended the weighted strong and $L(\log L)$ type endpoint estimates to the commutators defined in (\ref{jentry}) whenever $\omega$ satisfies the $\log$-Dini(1) condition, which is stronger than Dini$(1)$ condition but it is much weaker than the standard kernel $\omega(t)=t^\varepsilon$.
More previous works on the commutators of multilinear operators with $\omega(t)=t^\varepsilon$ can be found in \cite{PPTT}, \cite{PT}, \cite{SX}, \cite{T} and \cite{X}).
\subsection{Main results}This paper is concerned with the sharp endpoint estimates for both the commutator in the $j$-th entry defined in $(\ref{jentry})$ and iterated commutators defined in $(\ref{iterated})$ with a C-Z kernel of $\omega$ type. We show that they are bounded from product Hardy space $H^1\times\cdot\cdot\cdot\times H^1$ to weak $L^{\frac{1}{m},\infty}$ space, whenever the kernel satisfies a class of Dini type condition. However, the proof is very difficult and complicated. In particular, in the case of iterated commutators, sometimes, we need to control six summations and three integrals at the same time even for $m=2$. We formulate our main results as follows.
\begin{theorem} \label{thm1}
Let $T$ be a multilinear Calder\'{o}n-Zygmund operators with a C-Z kernel of $\omega$ type and $T_{\vec{b}}$ be the commutators of the $j$-th entries defined in $(\ref{jentry})$ with $\vec{b}\in BMO^m$. If $\omega(t)$ satisfies the $\log$-${\text{Dini}}(1)$ condition, then there exists a constant $C>0$, such that the following inequality holds
\begin{equation}\label{eq1}
|\{x\in\reals^n:|T_{\vec{b}}(\vec{f})(x)|>\lambda\}|\leq C_{\norm{\vec{b}}_{BMO^m}}\lambda^{-\frac{1}{m}}\prod_{j=1}^{m}\norm{f_j}_{H^{1}(\reals^n)}^{\frac{1}{m}}.
\end{equation}
 \end{theorem}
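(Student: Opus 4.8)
The plan is to peel off the two natural pieces of the commutator, dispatch one of them at once, and attack the other through the atomic decomposition together with a careful splitting of the region of integration. Since $T_{\vec b}=\sum_{i=1}^{m}T^{i}_{\vec b}$ and the functional $g\mapsto\|g\|_{L^{1/m,\infty}}^{1/m}$ is quasi-subadditive with a constant depending only on $m$, it suffices to prove $(\ref{eq1})$ for one summand, say $T^{1}_{\vec b}$. Choose an atomic decomposition $f_{1}=\sum_{k}\lambda^{1}_{k}a^{1}_{k}$ with $\sum_{k}|\lambda^{1}_{k}|\le 2\|f_{1}\|_{H^{1}}$, each $a^{1}_{k}$ a $(1,\infty,0)$-atom supported on a ball $B^{1}_{k}$, and write $c_{k}$ for the average of $b_{1}$ over $B^{1}_{k}$. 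Multilinearity in the first slot gives
\begin{equation*}
T^{1}_{\vec b}(\vec f)(x)=\sum_{k}\lambda^{1}_{k}\bigl(b_{1}(x)-c_{k}\bigr)T(a^{1}_{k},f_{2},\dots,f_{m})(x)-T\Bigl(\sum_{k}\lambda^{1}_{k}(b_{1}-c_{k})a^{1}_{k},\,f_{2},\dots,f_{m}\Bigr)(x).
\end{equation*}
The last term is harmless: the function $g_{1}=\sum_{k}\lambda^{1}_{k}(b_{1}-c_{k})a^{1}_{k}$ satisfies $\|g_{1}\|_{L^{1}}\lesssim\|b_{1}\|_{BMO}\|f_{1}\|_{H^{1}}$ by the John--Nirenberg inequality, so the $L^{1}\times\cdots\times L^{1}\to L^{1/m,\infty}$ bound for $T$ of Lu and Zhang \cite{LZ} (together with $\|f_{j}\|_{L^{1}}\le\|f_{j}\|_{H^{1}}$) already yields the desired estimate for this part. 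All of the real work therefore goes into the first term.

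For the first term, decompose $f_{2},\dots,f_{m}$ into atoms as well, so that it becomes a multiple sum of pieces $\lambda^{1}_{k_{1}}\cdots\lambda^{m}_{k_{m}}\,\bigl(b_{1}(x)-c_{k_{1}}\bigr)\,T(a^{1}_{k_{1}},\dots,a^{m}_{k_{m}})(x)$, the atoms living on balls $B^{j}_{k_{j}}=B(z^{j}_{k_{j}},r^{j}_{k_{j}})$. Fix such a tuple, let $B^{*}$ be an enlargement of the ball among them of maximal radius (or of a ball swallowing those comparable to it), and decompose both $\reals^{n}$ and the domain of integration $(\reals^{n})^{m}$ according to the relative positions and sizes of the balls and of $x$. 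On the global part, where $x$ is far from the supporting balls, one uses $\int a^{j}=0$: subtracting from $K(x,\vec y)$ its values along a path moving each $y_{j}$ to the center $z^{j}$ and invoking $(\ref{k-H1s})$--$(\ref{k-H2s})$ produces, for each variable used, a factor $\omega\bigl(r^{j}/|x-z^{j}|\bigr)$, so that summing over the dyadic annuli $2^{\ell+1}B^{*}\setminus 2^{\ell}B^{*}$ the contribution of $T(a^{1}_{k_{1}},\dots,a^{m}_{k_{m}})$ is governed by $\sum_{\ell}\omega(2^{-\ell})$; the extra factor $|b_{1}(x)-c_{k_{1}}|$, whose corona average grows like $(1+\ell)\|b_{1}\|_{BMO}$ (with further logarithmic losses when $B^{1}_{k_{1}}$ is much smaller than $B^{*}$, again controlled by John--Nirenberg and the comparison of averages of $b_{1}$ over balls at different scales), turns this into $\sum_{\ell}(1+\ell)\,\omega(2^{-\ell})$, which is finite precisely under the $\log$-$\mathrm{Dini}(1)$ hypothesis $(\ref{condition1})$ — this is the one place where the logarithmic strengthening of $\mathrm{Dini}(1)$ is needed, and it yields an $L^{1}$, hence weak-$L^{1/m}$, bound on the global part. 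On the local part, where $x$ sits near one of the balls, one uses the strong and weak type bounds for $T$ itself from \cite{LZ}: the level set $\{x\in B^{*}:|b_{1}(x)-c_{k_{1}}|\,|T(a^{1}_{k_{1}},\dots,a^{m}_{k_{m}})(x)|>\mu\}$ is split into $\{|b_{1}-c_{k_{1}}|>\mu/\nu\}\cap B^{*}$ and $\{|T(a^{1}_{k_{1}},\dots,a^{m}_{k_{m}})|>\nu\}$ and $\nu$ optimized using $L^{2}$-type Chebyshev bounds for both factors (the $L^{2}$ John--Nirenberg estimate for $b_{1}$ on $B^{*}$ and the $L^{2}\times\cdots\times L^{2}$ boundedness of $T$ with values in $L^{2/m}$), which absorbs the measure $|B^{*}|$; this is complemented by a further splitting of the $\vec y$-integral into a near-diagonal part (handled by the $L^{2}$ bound) and an off-diagonal part (handled by the kernel decay as above).

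The proliferation of cases is forced by the fact that the balls $B^{j}_{k_{j}}$ can have wildly different radii and may or may not overlap, so that in each geometric configuration the meaning of ``near'' and ``far,'' and which variable's modulus-of-continuity estimate to invoke, changes; each resulting piece must be organized into one that is globally integrable or one that is locally controlled, uniformly in the configuration up to a bookkeeping of the ball sizes. The remaining, and in my view principal, obstacle is the reassembly: one must produce the local estimates in a form — a fixed power of $|B^{*}|$ against a negative power of the level, matched across the $m$ entries — that the key lemma of M.\ Christ can sum over all tuples of atoms into $\prod_{j}\|f_{j}\|_{H^{1}}$ without loss, while at the same time keeping every global estimate summable under only $\log$-$\mathrm{Dini}(1)$; since these two demands pull the intermediate parameters (the $\nu$ above, and the cut-offs in the $\vec y$-decomposition) in opposite directions, balancing them is the crux. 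The iterated commutator $T_{\Pi b}$ fits the same scheme once $\prod_{j}(b_{j}(x)-b_{j}(y_{j}))$ is expanded into $2^{m}$ terms indexed by subsets $S\subseteq\{1,\dots,m\}$, with the average $(b_{j})_{B^{j}}$ inserted for $j\in S$; this multiplies the number of sums and cases but not the essential ideas.
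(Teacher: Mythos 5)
Your outline assembles many of the correct pieces — the split $b_{1}(x)-b_{1}(y_{1})=(b_{1}(x)-c_{k})+(c_{k}-b_{1}(y_{1}))$ with the second half dispatched by the $L^{1}\times\cdots\times L^{1}\to L^{1/m,\infty}$ bound of $T$, the far-field estimate via~$(\ref{k-H2s})$ producing $\sum_{\ell}(1+\ell)\,\omega(2^{-\ell})$, John--Nirenberg to absorb the $\ell$-loss, and the recognition that this is exactly where $\log$-$\mathrm{Dini}(1)$ enters — but it leaves the decisive step unresolved and, more importantly, misconstrues the role of Christ's lemma. You propose to derive, for each fixed tuple of atoms, a local weak-$L^{1/m}$ bound and then ``let Christ's lemma sum over all tuples of atoms.'' That cannot work: for $p<1$ the best one can hope for from a tuple-by-tuple bound $\|\lambda_{k_{1}}\cdots\lambda_{k_{m}}(b_{1}-c_{k_{1}})T(a_{k_{1}},\dots,a_{k_{m}})\|_{L^{1/m,\infty}}^{1/m}\lesssim\prod_{j}|\lambda_{k_{j}}|^{1/m}$ is, after summing, a factor $\prod_{j}\bigl(\sum_{k_{j}}|\lambda_{k_{j}}|^{1/m}\bigr)$, and $\sum_{k}|\lambda_{k}|^{1/m}$ is not controlled by $\bigl(\sum_{k}|\lambda_{k}|\bigr)^{1/m}$ — the inequality goes the wrong way. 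No amount of optimizing the intermediate parameter $\nu$ in the local Chebyshev split fixes this; the loss is combinatorial, not analytic.

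The paper avoids this entirely by using Christ's lemma \emph{before} any atom-by-atom analysis, as a stopping-time device that converts the given atomic decomposition into a Calder\'on--Zygmund-type split $f_{j}=g_{j}+h_{j}$ at height $\alpha=(\gamma\lambda)^{1/2}$: $g_{j}$ is in $L^{\infty}\cap L^{1}$ with $\|g_{j}\|_{\infty}\le(\gamma\lambda)^{1/2}$ and $\|g_{j}\|_{1}\lesssim\|f_{j}\|_{H^{1}}$, while $h_{j}$ collects the atoms sitting in pairwise disjoint ``bad'' cubes $S_{j,l_{j}}$ whose dilated union $S^{*}$ has $|S^{*}|\lesssim(\gamma\lambda)^{-1/2}$ and is simply removed. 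The term $T_{\vec b}(g_{1},\dots,g_{m})$ is then handled in one stroke by the $L^{2}\times\cdots\times L^{2}\to L^{1,\infty}$ boundedness of the \emph{commutator itself} (not of $T$) together with $\|g_{j}\|_{L^{2}}^{2}\le\|g_{j}\|_{\infty}\|g_{j}\|_{1}$; this is the ingredient that replaces your proposed ``optimize $\nu$'' local estimate, and it does not appear in your outline at all. Outside $S^{*}$ the atoms in $h_{j}$ are automatically in the far field, so the $\sum_{\ell}(1+\ell)\omega(2^{-\ell})$ computation applies uniformly, and the pairwise disjointness of the $S_{j,l_{j}}$ together with property (I) of Christ's lemma lets one sum $\sum_{Q\subset S}\lambda_{Q}\le 2^{n}\alpha|S|$ to recover $\prod_{j}\|f_{j}\|_{H^{1}}^{1/m}$ without the $\ell^{1/m}$-versus-$\ell^{1}$ loss. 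In short: the gap you flag as ``the crux'' is real, and the resolution is a different organization of the proof, not a sharper version of the one you sketch. (A secondary point: the iterated commutator $T_{\Pi b}$ is not merely ``more sums and cases''; the paper needs the strictly stronger $\log$-$\mathrm{Dini}(1/2m)$ hypothesis there because factors of $\omega$ must be split as $\omega^{1/2}\cdot\omega^{1/2}$ (and further) to serve two BMO variables at once, so the essential ideas \emph{do} change.)
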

 With a more stronger condition assumed on the function $\omega(t)$ than in Theorem \ref{thm1}, but weaker condition than the standard kernel $\omega(t)=t^\varepsilon$, we obtain the following theorem for the iterated commutators.
 \begin{theorem} \label{thm1itrated}Let $\omega(t)$ be a doubling function, satisfying the $\log$-${\text{Dini}}(1/2m)$ condition, that is, $$\int_0^1\omega(t)^{\frac{1}{2m}}t^{-1}\left(1+\log\frac{1}{t}\right)dt<\infty.$$
Let $T$ be a multilinear Calder\'{o}n-Zygmund operators with a C-Z kernel of $\omega$ type and $T_{\Pi b}$ be the iterated commutators defined in $(\ref{iterated})$ with $\vec{b}\in BMO^m$. Then there exists a constant $C>0$, such that the following inequality holds
\begin{equation}\label{eq1t2}
|\{x\in\reals^n:|T_{\Pi b}(\vec{f})(x)|>\lambda\}|\leq  C_{\norm{\vec{b}}_{BMO^m}}\lambda^{-\frac{1}{m}}\prod_{j=1}^{m}\norm{f_j}_{H^{1}(\reals^n)}^{\frac{1}{m}}.
\end{equation}
 \end{theorem}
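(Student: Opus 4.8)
The plan is to run the familiar machinery for $H^1\times\cdots\times H^1\to L^{1/m,\infty}$ endpoint bounds, but with \emph{all} kernel regularity measured by $\omega$ and with the extra logarithmic losses produced by the $m$ $BMO$ functions of the iterated commutator absorbed by the $\log$-Dini$(1/2m)$ integral. First I would use the natural scaling of $T_{\Pi b}$ to normalize $\prod_j\|f_j\|_{H^1}=1$, so that it remains to show $|\{x:|T_{\Pi b}(\vec f)(x)|>\lambda\}|\le C\lambda^{-1/m}$ for all $\lambda>0$. Next I would invoke the atomic decomposition, writing each $f_j=\sum_k\lambda_j^k a_j^k$ with $(1,\infty,0)$-atoms $a_j^k$ supported in cubes $Q_j^k$ and $\sum_k|\lambda_j^k|\lesssim\|f_j\|_{H^1}$, and expand by multilinearity $T_{\Pi b}(\vec f)=\sum_{\vec k}\prod_j\lambda_j^{k_j}\,T_{\Pi b}(a_1^{k_1},\dots,a_m^{k_m})$. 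Since $1/m\le 1$, $L^{1/m,\infty}$ is a $1/m$-quasi-normed space, so passing from a single-tuple estimate back to the full sum costs only the finiteness of $\sum_k|\lambda_j^k|$.

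The core reduction is the key lemma of M. Christ. Because $T_{\Pi b}$ is genuinely \emph{not} bounded into $L^{1/m}$ (only into the weak space), the per-tuple estimate cannot be an $L^{1/m}$ bound; Christ's lemma is precisely the device that converts (i) a strong bound $T_{\Pi b}\colon L^{q_1}\times\cdots\times L^{q_m}\to L^q$ with $q_j>1$, $1/q=\sum 1/q_j$ — which I would take from the Lu--Zhang circle of ideas for $\omega$-CZ operators and their commutators under the Dini hypothesis, and which disposes of the part of space lying near the atoms' cubes, a set of measure $\lesssim\lambda^{-1/m}$ — into the desired weak endpoint bound, provided one also supplies (ii) a \emph{uniform} off-diagonal estimate $\int_{(\widetilde{Q}^\ast)^c}|T_{\Pi b}(a_1^{k_1},\dots,a_m^{k_m})(x)|\,dx\le C$, where $Q^\ast$ is the largest cube of the tuple and $\widetilde{Q}^\ast$ a fixed dilate of it. In the iterated case I expect to need a $BMO$-weighted refinement of (ii), so that the logarithms generated below survive the eventual summation over the atom indices.

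To prove (ii) I would first strip the $BMO$ functions: put $\beta_j=b_j^{Q^\ast}$, write $b_j(x)-b_j(y_j)=(b_j(x)-\beta_j)-(b_j(y_j)-\beta_j)$, and expand the product in $(\ref{iterated})$ into $2^m$ terms indexed by subsets $S\subseteq\{1,\dots,m\}$, the $S$-term carrying $\prod_{j\in S}(b_j(x)-\beta_j)$ outside and $\prod_{j\in S^c}(b_j(y_j)-\beta_j)$ inside the integral. Then I would decompose $(\widetilde{Q}^\ast)^c=\bigcup_{l\ge1}A_l$ into dyadic annuli about the center of $Q^\ast$; on $A_l$ all distances $|x-y_j|$ are comparable to $2^l\ell(Q^\ast)$. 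Using $\int a_j=0$ in each variable, I replace $K(x,\vec y)$ by its iterated difference about the centers of the $Q_j^{k_j}$; estimate $(\ref{k-H2s})$ turns the $j$-th replacement into a gain $\omega(\ell(Q_j^{k_j})/2^l\ell(Q^\ast))\le\omega(2^{1-l})$, while the residual kernel size $(\ref{k-sizes})$ integrated against the annulus volume contributes a clean geometric factor after the normalizations $\|a_j^{k_j}\|_\infty\le|Q_j^{k_j}|^{-1}$. The $BMO$ factors contribute $|b_j(x)-\beta_j|\lesssim(1+l)\|b_j\|_{BMO}$ on $A_l$ for $j\in S$, and $\int_{Q_j^{k_j}}|a_j^{k_j}|\,|b_j-\beta_j|\lesssim\|b_j\|_{BMO}\bigl(1+\log(\ell(Q^\ast)/\ell(Q_j^{k_j}))\bigr)$ for $j\in S^c$. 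Summing over $l$ is then immediate once one has a positive power of $\prod_j\omega(2^{-l})$ against $(1+l)^m$, i.e. once $\int_0^1\omega(t)^a t^{-1}(1+\log\frac{1}{t})\,dt<\infty$ for a suitable $a$.

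The hard part — and what pins the exponent $1/2m$ — is the final bookkeeping: there remain $m-|S|$ logarithms $1+\log(\ell(Q^\ast)/\ell(Q_j^{k_j}))$ that must be killed when one re-sums over the atom indices $k_1,\dots,k_m$ (with the weights $\lambda_j^{k_j}$), and simultaneously the $x$-side and $y$-side contributions must be kept under control. I would handle this by distributing, via Hölder's inequality across the $m$ index-sums and by separating the $x$- and $y$-roles, fractional powers of the gain $\prod_j\omega(2^{-l})$: halving each of the $m$ available exponents $1/m$ down to $1/2m$ leaves just enough to both beat the logarithms (hence the extra $1+\log\frac{1}{t}$ in the hypothesis) and sum the annuli, while the doubling of $\omega$ is what lets $\sum_l\omega(2^{-l})^{1/2m}(1+l)$ be compared with the integral in the statement. (For the $j$-th-entry commutator of Theorem \ref{thm1} only one $BMO$ function is active, so $\log$-Dini$(1)$ already suffices; it is the presence of all $m$ of them here that forces the smaller power.) Reassembling — near-cube part by Christ's lemma and the $L^q$ bound, off-diagonal part by the annulus sums, then summing over the $2^m$ subsets $S$ and over the tuples and using the $1/m$-quasi-triangle inequality in $L^{1/m,\infty}$ — yields $(\ref{eq1t2})$. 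I expect the genuine obstacle to be organizational: keeping the several interlocking summations (the $m$ atom indices, the subset $S$, the annulus index $l$ — already more than six pieces when $m=2$) and the $m+1$ integrals (the $m$ spatial integrals together with the Dini integral) simultaneously convergent, rather than any single inequality.
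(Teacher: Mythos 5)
You have the right building blocks in outline — atomic decomposition, Christ's lemma to carve out an exceptional set, BMO stripping into $2^m$ subsets, dyadic annuli about the atoms' cubes, logarithmic losses from the $b_j$'s on annuli, and the diagnosis that the kernel gain must be split into fractional powers $\omega^{1/(2m)}$ via Cauchy--Schwarz, which is exactly what forces the $\log$-Dini$(1/2m)$ hypothesis. However the central reduction you propose, to a \emph{uniform per-tuple} off-diagonal bound $\int_{(\widetilde Q^\ast)^c}|T_{\Pi b}(a_1^{k_1},\dots,a_m^{k_m})|\,dx\le C$ followed by re-summation, cannot produce the exponent $\lambda^{-1/m}$, and it is not what Christ's lemma is doing in the paper. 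Two concrete failures: (a) if such a per-tuple $L^1$ bound held, multilinearity in each slot would give $\int_{(S^\ast)^c}|T_{\Pi b}(\vec f)|\,dx\lesssim\prod_j\sum_{k_j}|\lambda_j^{k_j}|\lesssim 1$, and Chebyshev then only yields $|\{x\notin S^\ast:|T_{\Pi b}(\vec f)(x)|>\lambda\}|\lesssim\lambda^{-1}$, which is strictly weaker than the target $\lambda^{-1/m}$ whenever $\lambda<1$; no extra power of $\lambda$ appears in this route. (b) Your fallback appeal to the $1/m$-quasi-triangle inequality in $L^{1/m,\infty}$ does not rescue it: $p$-convexity of $L^{p,\infty}$ with $p=1/m$ requires $\sum_{\vec k}\prod_j|\lambda_j^{k_j}|^{1/m}=\prod_j\sum_{k_j}|\lambda_j^{k_j}|^{1/m}<\infty$, and finiteness of $\sum_k|\lambda_j^k|$ does \emph{not} control $\sum_k|\lambda_j^k|^{1/m}$ when $m>1$.

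What actually closes this gap in the paper is that Christ's lemma is invoked at the level-dependent threshold $\alpha=(\gamma\lambda)^{1/m}$, and both of its density statements are used. Property (III) gives a split $f_j=g_j+h_j$ with $\|g_j\|_{L^\infty}\lesssim(\gamma\lambda)^{1/m}$; property (I) gives the mass bound $\sum_{Q\subset S}\lambda_Q\lesssim(\gamma\lambda)^{1/m}|S|$ on each bad cube. The $\lambda$-decay is harvested by performing the sum over atom indices \emph{inside} the spatial integral: for $m=2$, after integrating out $y_2$ one finds
\begin{align*}
\sum_{S_{2,l_2}}\sum_{Q_{2,k_2}\subset S_{2,l_2}}|\lambda_{Q_{2,k_2}}|\,\sup_{y_1,y_2\in S}\frac{1}{(|x-y_1|+|x-y_2|)^{2n}}\ \lesssim\ (\gamma\lambda)^{1/2}\,|x-y_1|^{-n},
\end{align*}
where the $(\gamma\lambda)^{1/2}$ comes from (I) and the bilinear kernel collapses to a linear one. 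This is irreducibly \emph{not} a per-tuple estimate: the entire sum over one index is traded for a $(\gamma\lambda)^{1/m}$ factor and a lower-order kernel. Likewise the mixed terms (e.g.\ $T_{\Pi b}(h_1,g_2)$) get their $(\gamma\lambda)^{1/2}$ from $\|g_2\|_{L^\infty}$, again not from any per-atom bound. So the paper's architecture is: split into the $2^m$ combinations $T_{\Pi b}(g/h,\dots,g/h)$, handle the all-$g$ piece with the known $L^2\times\cdots\times L^2\to L^{1,\infty}$ bound and Chebyshev, and for each of the remaining pieces run the BMO-stripping/annular/Dini estimate you correctly sketched \emph{with the atom summations kept inside the integrals} so that (I) and (III) can fire. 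Without the level-dependent Christ decomposition and the in-integral density bound, your plan cannot get past $\lambda^{-1}$.
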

This article is organized as follows. In Section 2, the proof of Theorem \ref{thm1} will be given. Section 3 will be devoted to give the proof of Theorem \ref{thm1itrated}.
\section{Proofs of Theorem \ref{thm1}}
To prove Theorem \ref{thm1}, we need the following key lemma given by Chirst \cite{Ch}, which provides a foundation for our analysis.
\begin{lemma}\label{lemma}(\cite{Ch})
For any $\alpha>0$ and any finite collection of dyadic cubes $Q$ and associated positive scalars $\lambda_{Q}$, there exists a collection of pairwise disjoint dyadic cubes $S$ such that\par
$(1)$ $\sum \limits_{Q\subset S}\lambda_{Q}\leq 2^n\alpha|S|$, for all $S$;\par
$(2)$ $\sum |S|\leq\alpha^{-1}\sum \lambda_Q$;\par
$(3)$ $\norm{\sum \limits_{Q\nsubseteq \ \text{any} \ S}\lambda_{Q}|Q|^{-1}\chi_Q}_{L^{\infty}(\reals^n)}\leq\alpha$.
\end{lemma}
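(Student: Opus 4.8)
The plan is to deduce all three properties from a single Calder\'on--Zygmund stopping time run on the density function
$$G(x):=\sum_{Q}\lambda_Q\,|Q|^{-1}\chi_Q(x),$$
the sum being over the given finite collection of dyadic cubes. Since $\int_{\reals^n}\chi_Q=|Q|$, the function $G$ is nonnegative with $\int_{\reals^n}G=\sum_Q\lambda_Q<\infty$. Because the collection is finite, $|R|^{-1}\int_R G<\alpha$ once the dyadic cube $R$ is large enough, so the dyadic cubes $R$ with $|R|^{-1}\int_R G>\alpha$ (the \emph{heavy} cubes) are bounded in size and each lies in a maximal heavy cube; I take $\mathcal{S}$ to be the family of maximal heavy cubes and put $\Omega=\bigcup_{S\in\mathcal{S}}S$. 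As two dyadic cubes are either nested or disjoint, maximality forces the members of $\mathcal{S}$ to be pairwise disjoint.

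Two elementary remarks will be used repeatedly. First, for any dyadic cube $P$,
$$\sum_{Q\supseteq P}\lambda_Q|Q|^{-1}=\frac1{|P|}\sum_{Q\supseteq P}\lambda_Q|Q|^{-1}|Q\cap P|\le\frac1{|P|}\sum_{Q}\lambda_Q|Q|^{-1}|Q\cap P|=\frac1{|P|}\int_P G,$$
since $|Q\cap P|=|P|$ when $Q\supseteq P$. Second, if $z\notin\Omega$ then $G(z)\le\alpha$: a dyadic cube $Q_0\ni z$ of sidelength smaller than that of every cube in the collection has $G$ constant on it, equal to $G(z)$, and $Q_0$ is not heavy (else $z\in Q_0\subseteq\Omega$), so $G(z)=|Q_0|^{-1}\int_{Q_0}G\le\alpha$.

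With these in hand I would verify the conclusions as follows. For $(2)$: each $S\in\mathcal{S}$ has $\alpha|S|<\int_S G$ and the $S$ are disjoint, so $\alpha\sum_S|S|<\sum_S\int_S G\le\int_{\reals^n}G=\sum_Q\lambda_Q$. For $(1)$: fix $S\in\mathcal{S}$ with dyadic parent $\widehat S$; maximality gives $|\widehat S|^{-1}\int_{\widehat S}G\le\alpha$, and since $0\le\sum_{Q\subseteq S}\lambda_Q|Q|^{-1}\chi_Q\le G$ pointwise and $\int_S\chi_Q=|Q|$ for $Q\subseteq S$,
$$\sum_{Q\subseteq S}\lambda_Q=\int_S\Big(\sum_{Q\subseteq S}\lambda_Q|Q|^{-1}\chi_Q\Big)\le\int_S G\le\int_{\widehat S}G\le\alpha|\widehat S|=2^n\alpha|S|.$$
For $(3)$: fix $x$ and evaluate the sum there. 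If $x\notin\Omega$ no cube $Q\ni x$ lies in any $S\in\mathcal{S}$, so the value is $\sum_{Q\ni x}\lambda_Q|Q|^{-1}=G(x)\le\alpha$ by the second remark. If $x\in\Omega$ it lies in exactly one $S_0\in\mathcal{S}$, and a cube $Q\ni x$ fails to lie in any member of $\mathcal{S}$ precisely when $Q\supsetneq S_0$ (if such a $Q$ were inside some $S\in\mathcal{S}$ then $S\supseteq Q\supsetneq S_0$ would break disjointness); since $Q\supsetneq S_0$ is equivalent to $Q\supseteq\widehat{S_0}$ for dyadic cubes, the value at $x$ equals $\sum_{Q\supseteq\widehat{S_0}}\lambda_Q|Q|^{-1}\le|\widehat{S_0}|^{-1}\int_{\widehat{S_0}}G\le\alpha$ by the first remark and the maximality of $S_0$.

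The hard part will be seeing $(3)$: one has to hit on the right object to stop on, namely the density $G=\sum_Q\lambda_Q|Q|^{-1}\chi_Q$ itself, so that off $\Omega$ the non-selected part of the sum is exactly $G$, while inside a stopping cube the non-selected cubes all contain its parent and are hence controlled by the average of $G$ over that parent, which is $\le\alpha$ by maximality with no need to know whether the parent meets the complement of $\Omega$. Properties $(1)$ and $(2)$ then fall out of the same construction with essentially no extra work.
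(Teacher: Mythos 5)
Your argument is correct: the Calder\'on--Zygmund stopping-time construction on the density $G=\sum_Q\lambda_Q|Q|^{-1}\chi_Q$ (maximal dyadic cubes with average of $G$ exceeding $\alpha$) yields (1) from the parent's non-heaviness, (2) from disjointness, and (3) by the dichotomy $x\notin\Omega$ versus $Q\supseteq\widehat{S_0}$, all checked soundly. The paper itself gives no proof of this lemma (it is quoted from Christ's 1988 Annals paper), and your proof is essentially the standard argument from that source, so there is nothing to compare beyond noting agreement.
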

\begin{proof}[Proof of Thoerem~\ref{thm1}]

For simplicity, we only consider the case for $m=2$, because there is no essential difference for the general case.

Since~$T_{\vec{b}}$~is bounded from~$L^{2}(\reals^n)\times L^{2}(\reals^n)$~into~$L^1(\reals^n)$ \cite{LZ}, and finite sums of atoms are dense in~$H^1(\reals^n)$, we will work with such sums and will obtain desired estimates which is independent of the number of terms in each sum. Thus, for any given $f_j\in H^1(\reals^n)\  (j=1, 2)$, we may assume that $f_j=\sum_{k_j}\lambda_{k_j}a_{k_j}$ is a finite sum of $H^1$-atoms, where each $a_{k_j}$ is a $(1,\infty,0)$ atom, with $\sum_{k_j}|\lambda_{k_j}|\leq C\norm{f_j}_{H^1(\reals^n)}$. Set~$C_1=\norm{T_{\vec{b}}}_{L^2\times L^2\rightarrow L^{1,\infty}}$ and~$C_2=\norm{T}_{L^1\times L^1\rightarrow L^{\frac{1}{2},\infty}}$.
By linearity, it is sufficient to consider the commutator of $T$ with only one symbol, that is, for $\vec{b}=b\in BMO(\reals^n)$, we will consider the operator
$$T_b(f_1, f_2)(x)=b(x)T(f_1, f_2)(x)-T(bf_1, f_2)(x).$$
To prove inequality $(\ref{eq1})$, without loss of generality, we may assume that $\norm{f_j}_{H^1(\reals^n)}=1$ for $j=1, 2$. For fix $\lambda>0$, we only need to show that there is a constant $C>0$, independent on the variables and $f_{j} (j=1,2)$, such that
\begin{equation}\label{eq2}
|\{x\in\reals^n:|T_{b}(f_1, f_2)(x)|>\lambda\}|\leq C(C_0+C_1+C_2)^{1/2}\lambda^{-1/2}.
\end{equation}
Let $\gamma$ be a positive number to be determined later. For the finite collection of dyadic cubes $Q_{j,k_j}$, which associated with the positive scalars $\lambda_{Q_{j,k_j}}$ in the given atomic decomposition of $f_j$. Now, we take $\alpha=(\gamma\lambda)^{1/2}$ in lemma \ref{lemma}. Then, there exists a collection of pairwise disjoint dyadic cubes $S_{j,l_j}$, such that\par
\begin{align*}
&\text{(I)}\ \  \sum \limits_{Q_{j,k_j}\subset S_{j,l_j}}\lambda_{Q_{j,k_j}}\leq 2^n(\gamma\lambda)^{1/2}|S_{j,l_j}|, \text{\ \ for all\quad}S_{j,l_j};\\
&\text{(II)}\ \ \sum \limits_{S_{j,l_j}}|S_{j,l_j}|\leq(\gamma\lambda)^{-1/2}\sum \limits_{Q_{j,k_j}\subset S_{j,l_j}}\lambda_{Q_{j,k_j}};\\
&\text{(III)}\ \  \norm{\sum \limits_{Q_{j,k_j}\nsubseteq \ \text{any} \ S_{j,l_j}}\lambda_{Q_{j,k_j}}|Q_{j,k_j}|^{-1}\chi_{Q_{j,k_j}}}_{L^{\infty}(\reals^n)}\leq(\gamma\lambda)^{1/2}.
\end{align*}
Denote $S_{j,l_j}^* = 8\sqrt{n}S_{j,l_j}$, $S_j^*=\cup_{l_j}S_{j,l_j}^* $ for $j=1, 2$, and $S^*=\cup_{j=1}^2S_{j}^* $. Set $$h_j=\sum \limits_{S_{j,l_j}}\sum \limits_{Q_{j,k_j}\subset S_{j,l_j}}\lambda_{Q_{j,k_j}}a_{Q_{j,k_j}}\ \text{ and} \ \ g_j(x)=f_j(x)-h_j(x).$$
 By the definition of $g_j$ and $h_j$, $(III)$ and the properties of $(1,\infty,0)$ atoms, we have
 \begin{align*}
 &\norm{g_j}_{L^{\infty}(\reals^n)}\leq(\gamma\lambda)^{1/2};\quad \norm{g_j}_{L^{1}(\reals^n)}\leq\sum \limits_{Q_{j,k_j}\nsubseteq \text{any} S_{j,l_j}}|\lambda_{Q_{j,k_j}}|\leq \sum\limits_{k_j} |\lambda_{k_j}|\leq C\norm{f_j}_{H^1(\reals^n)};\\
 &\norm{h_j}_{L^{1}(\reals^n)}\leq\sum \limits_{S_{j,l_j}}\sum \limits_{Q_{j,k_j}\subset S_{j,l_j}}|\lambda_{Q_{j,k_j}}|\int_{\reals^n}|a_{Q_{j,k_j}}|dx\leq \sum\limits_{k_j} |\lambda_{k_j}|\leq C\norm{f_j}_{H^1(\reals^n)}.
\end{align*}
Now, we introduce some more notations as follows:
\begin{align*}
&E_1=\left\{x\in\reals^n: |T_b(g_1, g_2)(x)| >\lambda/4\right\};\quad E_2=\left\{x\in\reals^n\backslash S^*: |T_b(g_1, h_2)(x)| >\lambda/4\right\};\\
&E_3=\left\{x\in\reals^n\backslash S^*: |T_b(h_1, g_2)(x)| >\lambda/4\right\};\quad E_{4}=\left\{x\in\reals^n\backslash S^*: |T_b(h_1, h_2)(x)| >\lambda/4\right\}.
 \end{align*}
 By $(II)$, it follows that
 \begin{equation}\label{usedlatter1}
 |S^*|\leq\sum\limits_{j=1}^2|S_{j}^*|\leq\sum\limits_{j=1}^2\sum\limits_{S_{j,l_j}}|S_{j,l_j}^*|\leq C(\gamma\lambda)^{-1/2}\sum\limits_{j=1}^2\sum\limits_{Q_{j,l_j}\subset S_{j,l_j}}\lambda_{Q_{j,l_j}}\leq C(\gamma\lambda)^{-1/2}.
  \end{equation}
From the $L^2\times L^2\rightarrow L^{1,\infty}$ boundedness of $T_{\vec{b}}$, the Chebyshev inequality and $\norm{g_j}_{L^{\infty}(\reals^n)}\leq(\gamma\lambda)^{1/2}$, one may obtain
\begin{align}\label{usedlatter2}
|E_1|&\leq C_1\lambda^{-1}\norm{g_1}_{L^2(\reals^n)} \norm{g_2}_{L^2(\reals^n)}\leq C_1\lambda^{-1}(\gamma\lambda)^{\frac{1}{2}}\norm{g_1}_{L^1(\reals^n)}^{\frac{1}{2}} \norm{g_2}_{L^1(\reals^n)}^{\frac{1}{2}}\\ \notag &
\leq CC_1\gamma^{\frac{1}{2}}\lambda^{-1}\norm{f_1}_{H^1(\reals^n)}^{\frac{1}{2}} \norm{f_2}_{H^1(\reals^n)}^{\frac{1}{2}}= CC_1\gamma^{\frac{1}{2}}\lambda^{-\frac{1}{2}}
 \end{align}
Therefore, we get
 \begin{align}\label{final1}
|\{x\in\reals^n:|T_{b}(\vec{f})(x)|>\lambda\}|& \leq \sum\limits_{s=1}^{4}|E_s|+C|S^*|\\
 &\leq \sum\limits_{s=2}^{4}|E_s|+C(\gamma\lambda)^{-1/2}+CC_1\gamma^{\frac{1}{2}}\lambda^{-\frac{1}{2}}\notag.
  \end{align}
 Hence, to finish the proof of Theorem \ref{thm1}, we only need to consider the contributions of each $|E_s|$ for $2\le s\le 4$, separately.\par
 \noindent
 \bull
 \textbf {Estimate for $|E_2|$.} By the definition of $g_j$ and $h_j$, the moment condition of $H^1$-atoms, and employing the linearity of $T_b$, it now follows that
\begin{align}\label{eqx}
 &T_b(h_1,g_2)(x)\\ \notag
 &=\sum\limits_{S_{1,l_1}}\sum\limits_{Q_{1,k_1}\subset S_{1,l_1}}\lambda_{Q_{1,k_1}}\iint_{(\reals^n)^2}\big(b(x)-b_{Q_{1,k_1}}\big)\big(K(x, y_1, y_2)-K(x, c_{{1,k_1}}, y_2)\big)\\ \notag
 &\quad\times a_{Q_{1,k_1}}(y_1)g_2(y_2)d\vec{y}\\ \notag
 &\quad+\sum\limits_{S_{1,l_1}}\sum\limits_{Q_{1,k_1}\subset S_{1,l_1}}\lambda_{Q_{1,k_1}}\iint_{(\reals^n)^2}\big(b_{Q_{1,k_1}}-b(y_1)\big)K(x, y_1, y_2)a_{Q_{1,k_1}}(y_1)g_2(y_2)d\vec{y}\\ \notag
 &=: I_{2,1}(x)+I_{2,2}(x).
 \end{align}
Therefore, we have
\begin{align*}|E_2|&\leq |\{x\in\reals^n\backslash S^*: |I_{2,1}(x)| >\lambda/8\}|+|\{x\in\reals^n\backslash S^*: |I_{2,2}(x)| >\lambda/8\}|\\& :=|E_{2,1}|+|E_{2,2}|.\end{align*}
Thus, to show the contributions of $E_2$, it remains to discuss the contributions of $E_{2,1}$ and $E_{2,2}$, respectively.

To estimate $|E_{2,1}|$, we fix $k_1$ and denote $\mathscr{R}_{1, k_1}^i=(2^{i+2}\sqrt{n}Q_{1, k_1})\backslash (2^{i+1}\sqrt{n}Q_{1, k_1})$, $i=1,2,\cdot\cdot\cdot$. Then, it is obvious that $\reals^n\backslash S^*\subset\reals^n\backslash Q_{1, k_1}^*\subset \cup_{i=1}^{\infty}\mathscr{R}_{1, k_1}^i.$ Let $c_{1,k_1}$ be the center of cube $Q_{1,k_1}$, $l_{Q_{1,k_1}}$ be the side length of cube $Q_{1,k_1}$ Then, for any $y_1\in Q_{1, k_1}$ and $x\in\mathscr{R}_{1, k_1}^i$, we have
\begin{equation}\label{ineq-h}
|y_1-c_{1,k_1}|\leq \frac{1}{2}\sqrt{n} l_{Q_{1,k_1}}\ \ \text{and}\ \ |x-c_{1,k_1}|\geq 2^{i-1}\sqrt{n} l_{Q_{1,k_1}}.
\end{equation}
 By the Chebychev inequality and $(\ref{k-H2s})$, it follows that
 \begin{align}\label{eq3}
 |E_{2,1}| &\leq\frac{8 C_0}{\lambda}\norm{g_2}_{L^{\infty}}\sum \limits_{S_{1,l_1}}\sum \limits_{Q_{1,k_1}\subset S_{1,l_1}}|\lambda_{Q_{1,k_1}}|\int_{\reals^n\backslash S^*}\int_{\reals^n}\int_{\reals^n}|b(x)-b_{Q_{1, k_1}}|\\
 \notag&\quad\times\frac{|a_{1, k_1}(y_1)|}{(|x-y_1|+|x-y_2|)^{2n}}\omega\big(\frac{|y_1-c_{1,k_1}|}{|x-y_1|+|x-y_2|}\big)dy_1dy_2dx.
  \end{align}
  Since $\reals^n\backslash S^*\subset \cup_{i=1}^{\infty}\mathscr{R}_{1, k_1}^i$ and $\omega$ is nondecreasing, together with $(\ref{ineq-h})$ and notice that $a_{1,k_1}\in L^1(\reals^n)$, one obtains
  \begin{align*}
   &\int_{\reals^n\backslash S^*}\int_{\reals^n}\int_{\reals^n}|b(x)-b_{Q_{1, k_1}}|\frac{|a_{Q_{1, k_1}}(y_1)|}{(|x-y_1|+|x-y_2|)^{2n}}\omega\big(\frac{|y_1-c_{1,k_1}|}{|x-y_1|+|x-y_2|}\big)dy_1dy_2dx\\
   &\leq\sum\limits_{i=1}^{\infty}\int_{\mathscr{R}_{1, k_1}^i}\int_{\reals^n}\int_{\reals^n}|b(x)-b_{Q_{1, k_1}}|\frac{|a_{Q_{1, k_1}}(y_1)|}{(|x-y_1|+|x-y_2|)^{2n}}\omega\bigg(\frac{|y_1-c_{1,k_1}|}{|x-y_1|}\bigg)dy_1dy_2dx\\
   &\leq C\sum\limits_{i=1}^{\infty}\omega(2^{-i})\int_{\mathscr{R}_{1, k_1}^i}\int_{\reals^n}|b(x)-b_{Q_{1, k_1}}|\frac{|a_{Q_{1, k_1}}(y_1)|}{|x-y_1|^{n}}dy_1dx\\
   &\leq C\sum\limits_{i=1}^{\infty}\omega(2^{-i})\frac{1}{|2^{i+2}Q_{1,k_1}|}\int_{2^{i+2}Q_{1,k_1}}|b(x)-b_{Q_{1, k_1}}|dx\\
   &\leq C\sum\limits_{i=1}^{\infty}i\omega(2^{-i})\norm{\vec{b}}_*\leq C.
  \end{align*}
  Putting the above estimate into $(\ref{eq3})$ and noticing the fact that $\norm{g_j}_{L^{\infty}(\reals^n)}\leq(\gamma\lambda)^{1/2}$, we have
  \begin{align}\label{eq44}
 |E_{2,1}| &\leq \frac{CC_0}{\lambda}(\gamma\lambda)^{\frac{1}{2}}\sum \limits_{S_{1,l_1}}\sum \limits_{Q_{1,k_1}\subset S_{1,l_1}}|\lambda_{Q_{1,k_1}}|\leq CC_0\gamma^{\frac{1}{2}}\lambda^{-\frac{1}{2}}.
  \end{align}
Now, we are in the position to estimate $|E_{2,2}|$. The $L^1\times L^1\rightarrow L^{\frac{1}{2}, \infty}$ boundedness of $T$ implies that

  \begin{align}\label{eq5}
|E_{2,2}|   &\leq CC_2^{\frac{1}{2}}\lambda^{-\frac{1}{2}}\sum \limits_{S_{1,l_1}}\sum \limits_{Q_{1,k_1}\subset S_{1,l_1}}|\lambda_{Q_{1,k_1}}|\norm{\big(b(x)-b_{Q_{1, k_1}}\big)a_{Q_{1, k_1}}}_{L^1(\reals^n)}^{\frac{1}{2}}\norm{g_2}_{L^1(\reals^n)}^{\frac{1}{2}}\\ \notag
  &\leq CC_2^{\frac{1}{2}}\lambda^{-\frac{1}{2}}\sum \limits_{S_{1,l_1}}\sum \limits_{Q_{1,k_1}\subset S_{1,l_1}}|\lambda_{Q_{1,k_1}}|\big(\frac{1}{|Q_{1,k_1}|}\int_{Q_{1,k_1}}|b(y_1)-b_{Q_{1, k_1}}|dy_1\big)^{\frac{1}{2}}\norm{f_2}_{H^1(\reals^n)}^{\frac{1}{2}}\\ \notag
  &\leq CC_2^{\frac{1}{2}}\norm{\vec{b}}^{\frac{1}{2}}_{*}\lambda^{-\frac{1}{2}}\\ \notag
  &\leq CC_2^{\frac{1}{2}}\lambda^{-\frac{1}{2}}.
  \end{align}
  Therefore in all, Combining $(\ref{eq44})$ and the above estimate, we conclude that
  \begin{equation*}
  |E_2|\leq C(C_0\gamma^{\frac{1}{2}}\lambda^{-\frac{1}{2}}+C_2^{\frac{1}{2}}\lambda^{-\frac{1}{2}}).
  \end{equation*}
  \noindent
 \bull
 \textbf {Estimate for $|E_3|$.} The contributions of $E_3$ are treated in the same way as we deal with $|E_2|$. In fact,
   \begin{align*}
 &T_b(g_1,h_2)(x)\\
 &=\sum\limits_{S_{2,l_2}}\sum\limits_{Q_{2,k_2}\subset S_{2,l_2}}\lambda_{Q_{2,k_2}}\iint_{(\reals^n)^2}\big(b(x)-b_{Q_{2,k_2}}\big)\big(K(x, y_1, y_2)-K(x, y_1 , c_{{2,k_2}})\big)\\
 &\quad\times g_1(y_1)a_{Q_{2,k_2}}(y_2)d\vec{y}\\
 &\quad+\sum\limits_{S_{2,l_2}}\sum\limits_{Q_{2,k_2}\subset S_{2,l_2}}\lambda_{Q_{2,k_2}}\iint_{(\reals^n)^2}\big(b_{Q_{2,k_2}}-b(y_2)\big)K(x, y_1, y_2)g_1(y_1)a_{Q_{2,k_2}}(y_2)d\vec{y}\\
 &=: I_{3,1}(x)+I_{3,2}(x).
 \end{align*}
 Repeating the same steps as what we have done for $|E_2|$, we may obtain
 \begin{equation*}
  |E_3|\leq C(C_0\gamma^{\frac{1}{2}}\lambda^{-\frac{1}{2}}+C_2^{\frac{1}{2}}\lambda^{-\frac{1}{2}}).
  \end{equation*}
 \noindent
 \bull
 \textbf {Estimate for $|E_4|$.}
 First, we split $T_b(h_1,h_2)$ in the form as follows:
  \begin{align*}
 &T_b(h_1,h_2)(x)\\ \notag
 &=\sum\limits_{S_{1,l_1}}\sum\limits_{Q_{1,k_1}\subset S_{1,l_1}}\sum\limits_{S_{2,l_2}}\sum\limits_{Q_{2,k_2}\subset S_{2,l_2}}\iint_{(\reals^n)^2}\big(b(x)-b_{Q_{1,k_1}}\big)\big(K(x, y_1, y_2)-K(x, c_{{1,k_1}}, y_2)\big)\\
 &\quad\times \lambda_{Q_{1,k_1}}a_{Q_{1,k_1}}(y_1)\lambda_{Q_{2,k_2}}a_{Q_{2,k_2}}(y_2)d\vec{y}\\
 &\quad+\sum\limits_{S_{1,l_1}}\sum\limits_{Q_{1,k_1}\subset S_{1,l_1}}\iint_{(\reals^n)^2}\big(b_{Q_{1,k_1}}-b(y_1)\big)K(x, y_1, y_2)\lambda_{Q_{1,k_1}}a_{Q_{1,k_1}}(y_1)h_2(y_2)d\vec{y}\\ \notag
 &=: I_{4,1}(x)+I_{4,2}(x).
 \end{align*}
 Hence, we have
\begin{equation}\label{eqy}
|E_4|\leq |\{x\in\reals^n\backslash S^*: |I_{4,1}(x)| >\lambda/8\}|+|\{x\in\reals^n\backslash S^*: |I_{4,2}(x)| >\lambda/8\}|.\end{equation}
For fixed $k_2$, denote $\mathscr{R}_{2, k_2}^h=(2^{h+2}\sqrt{n}Q_{2, k_2})\backslash (2^{h+1}\sqrt{n}Q_{2, k_2})$, $h=1,2,\cdot\cdot\cdot$. Recall the definition of $\mathscr{R}_{1, k_1}^i$, it is easy to check
\begin{equation*}
(S^*)^c:=\reals^n\backslash S^*\subset\reals^n\backslash(Q^*_{1,k_1}\bigcup Q^*_{1,k_2})\subset\bigcup_{h=1}^{\infty}\bigcup_{i=1}^{\infty}\big(\mathscr{R}_{1, k_1}^i\bigcap\mathscr{R}_{2, k_2}^h\big).
\end{equation*}
Therefore, one may obtain that
\begin{equation}\label{key}
(S^*)^c=(S^*)^c\bigcap\bigg(\bigcup_{h=1}^{\infty}\bigcup_{i=1}^{\infty}\big(\mathscr{R}_{1, k_1}^i\bigcap\mathscr{R}_{2, k_2}^h\big)\bigg)
=\bigcup_{h=1}^{\infty}\bigcup_{i=1}^{\infty}\bigg((S^*)^c\bigcap\big(\mathscr{R}_{1, k_1}^i\bigcap\mathscr{R}_{2, k_2}^h\big)\bigg).
\end{equation}

 By the Chebychev inequality, $(\ref{k-H2s})$ and $(\ref{key})$, it follows that
 \begin{align}\label{eq6}
 &|\{x\in\reals^n\backslash S^*: |I_{4,1}(x)| >\lambda/8\}|\\ \notag
 \notag&\leq\frac{8 C_0}{\lambda}\sum \limits_{S_{1,l_1}}\sum \limits_{Q_{1,k_1}\subset S_{1,l_1}}\sum \limits_{S_{2,l_2}}\sum \limits_{Q_{2,k_2}\subset S_{2,l_2}} \int_{\reals^n\backslash S^*}\iint_{(\reals^n)^2}|b(x)-b_{Q_{1, k_1}}|\\ \notag
 &\quad\times\frac{|\lambda_{Q_{1,k_1}}||a_{Q_{1, k_1}}(y_1)||\lambda_{Q_{2,k_2}}||a_{Q_{2, k_2}}(y_2)|}{(|x-y_1|+|x-y_2|)^{2n}}\omega\big(\frac{|y_1-c_{1,k_1}|}{|x-y_1|+|x-y_2|}\big)dy_1dy_2dx.
 \end{align}
 Moreover, by $(\ref{key})$, the integrals in the above summations can be controlled by:\begin{align}\label{eqqqq}
 &\sum\limits_{i=1}^{\infty}\sum\limits_{h=1}^{\infty}\int_{(S^*)^c\cap\mathscr{R}_{1, k_1}^i\cap\mathscr{R}_{2, k_2}^h} \iint_{(\reals^n)^2}|b(x)-b_{Q_{1, k_1}}|\\ \notag
 \notag&\quad\times\frac{|\lambda_{Q_{1,k_1}}||a_{Q_{1, k_1}}(y_1)||\lambda_{Q_{2,k_2}}||a_{Q_{2, k_2}}(y_2)|}{(|x-y_1|+|x-y_2|)^{2n}}\omega\big(\frac{|y_1-c_{1,k_1}|}{|x-y_1|}\big)dy_1dy_2dx\\ \notag
 \notag&\leq\sum\limits_{i=1}^{\infty}\sum\limits_{h=1}^{\infty}\omega(2^{-i})\int_{(S^*)^c\cap\mathscr{R}_{1, k_1}^i\cap\mathscr{R}_{2, k_2}^h} \iint_{(\reals^n)^2}|b(x)-b_{Q_{1, k_1}}|\\ \notag
 \notag&\quad\times|\lambda_{Q_{1,k_1}}||a_{Q_{1, k_1}}(y_1)||\lambda_{Q_{2,k_2}}||a_{Q_{2, k_2}}(y_2)|\sup\limits_{y_1, y_2\in S}\frac{1}{(|x-y_1|+|x-y_2|)^{2n}}dy_1dy_2dx.
  \end{align}
 For fixed $x\in (S^*)^c$, and any $y_1, y_2\in S$, observe that \[\inf\limits_{y_1\in S}|x-y_1|\approx|x-y_1|,\qquad \inf\limits_{y_2\in S}|x-y_2|\approx|x-y_2|.\]
  This implies that
 \begin{align} \label{rel1}
 \sup\limits_{y_1, y_2\in S}\frac{1}{(|x-y_1|+|x-y_2|)^{2n}}&=\frac{1}{(\inf\limits_{y_1\in S}|x-y_1|+\inf\limits_{y_2\in S}|x-y_2|)^{2n}}\\ \notag
 &\approx\frac{1}{(|x-y_1|+|x-y_2|)^{2n}}.
 \end{align}
 Note that $\{S_{j,l_j}\}_{l_j}$ are pairwise disjoint dyadic cubes, by $(I)$ and $(\ref{rel1})$, it now follows that
 \begin{align} \label{eq8}
 &\sum \limits_{S_{2,l_2}}\sum \limits_{Q_{2,k_2}\subset S_{2,l_2}}\int_{\reals^n}|\lambda_{Q_{2,k_2}}||a_{Q_{2, k_2}}(y_2)| \sup\limits_{y_1, y_2\in S}\frac{1}{(|x-y_1|+|x-y_2|)^{2n}}dy_2\\ \notag
 &=\sum \limits_{S_{2,l_2}}\sum \limits_{Q_{2,k_2}\subset S_{2,l_2}}|\lambda_{Q_{2,k_2}}|\sup\limits_{y_1, y_2\in S}\frac{1}{(|x-y_1|+|x-y_2|)^{2n}}\int_{\reals^n}|a_{Q_{2, k_2}}(y_2)|dy_2\\ \notag
 &\leq C\sum \limits_{S_{2,l_2}}\bigg(\sum \limits_{Q_{2,k_2}\subset S_{2,l_2}}|\lambda_{Q_{2,k_2}}|\bigg)\sup\limits_{y_1, y_2\in S}\frac{1}{(|x-y_1|+|x-y_2|)^{2n}}\\ \notag
 &\leq \sum \limits_{S_{2,l_2}}2^n(\gamma\lambda)^{1/2}|S_{2,l_2}|\sup\limits_{y_1, y_2\in S}\frac{1}{(|x-y_1|+|x-y_2|)^{2n}}\\ \notag
 &\leq C(\gamma\lambda)^{1/2}\sum \limits_{S_{2,l_2}}\int_{S_{2,l_2}}\frac{1}{(|x-y_1|+|x-y_2|)^{2n}}dy_2\\ \notag
 &\leq C(\gamma\lambda)^{1/2}\frac{1}{|x-y_1|^n}.
 \end{align}

 Combing $(\ref{eq6})$, $(\ref{eqqqq})$ and $(\ref{eq8})$, we obtain
 \begin{align}\label{eq9}
 &|\{x\in\reals^n\backslash S^*: |I_{4,1}(x)| >\lambda/8\}|\\ \notag
 &\leq CC_0\gamma^{\frac{1}{2}}\lambda^{-\frac{1}{2}}\sum \limits_{S_{1,l_1}}\sum \limits_{Q_{1,k_1}\subset S_{1,l_1}}\sum\limits_{i=1}^{\infty}\sum\limits_{h=1}^{\infty}\omega(2^{-i})\int_{(S^*)^c\cap\mathscr{R}_{1, k_1}^i\cap\mathscr{R}_{2, k_2}^h}\int_{\reals^n}|b(x)-b_{Q_{1, k_1}}| \\ \notag
 \notag&\quad\times\frac{|\lambda_{Q_{1,k_1}}||a_{Q_{1, k_1}}(y_1)|}{|x-y_1|^{2n}}dy_1dx\\ \notag
 &\leq CC_0\gamma^{\frac{1}{2}}\lambda^{-\frac{1}{2}}\sum \limits_{S_{1,l_1}}\sum \limits_{Q_{1,k_1}\subset S_{1,l_1}}\sum\limits_{i=1}^{\infty}\omega(2^{-i})\int_{\mathscr{R}_{1, k_1}^i}\int_{\reals^n}|b(x)-b_{Q_{1, k_1}}|\frac{|\lambda_{Q_{1,k_1}}||a_{Q_{1, k_1}}(y_1)| }{|x-y_1|^{2n}}dy_1dx\\ \notag
 &\leq CC_0\gamma^{\frac{1}{2}}\lambda^{-\frac{1}{2}}\sum \limits_{S_{1,l_1}}\sum \limits_{Q_{1,k_1}\subset S_{1,l_1}}|\lambda_{Q_{1,k_1}}|\sum\limits_{i=1}^{\infty}\omega(2^{-i})\frac{1}{|2^{i+2}Q_{1,k_1}|}\int_{2^{i+2}Q_{1,k_1}}|b(x)-b_{Q_{1, k_1}}|dx\\ \notag
  &\leq CC_0\norm{\vec{b}}_*\gamma^{\frac{1}{2}}\lambda^{-\frac{1}{2}}\sum \limits_{S_{1,l_1}}\sum \limits_{Q_{1,k_1}\subset S_{1,l_1}}|\lambda_{Q_{1,k_1}}|\sum\limits_{i=1}^{\infty}\omega(2^{-i})i\\ \notag
  &\leq CC_0\gamma^{\frac{1}{2}}\lambda^{-\frac{1}{2}}.
 \end{align}
 The estimate of $|\{x\in\reals^n\backslash S^*: |I_{4,2}(x)| >\lambda/8\}|$ is similar to $(\ref{eq5})$. In fact, we only need to replace $g_2$ by $h_2$ in $(\ref{eq5})$, and note that $\norm{h_2}_{L^1}\leq C\norm{f_2}_{H^1}$, we have
\begin{equation}\label{eq10}
|\{x\in\reals^n\backslash S^*: |I_{4,2}(x)| >\lambda/8\}|\leq CC_2^{\frac{1}{2}}\lambda^{-\frac{1}{2}}.
\end{equation}
Putting $(\ref{eq9})$ and $(\ref{eq10})$ into $(\ref{eqy})$, it yields that
\begin{equation*}
  |E_4|\leq C(C_0\gamma^{\frac{1}{2}}\lambda^{-\frac{1}{2}}+C_2^{\frac{1}{2}}\lambda^{-\frac{1}{2}}).
  \end{equation*}
   Thus, we have proved that
  \begin{equation}\label{final2}
  |E_s|\leq C(C_0\gamma^{\frac{1}{2}}\lambda^{-\frac{1}{2}}+C_2^{\frac{1}{2}}\lambda^{-\frac{1}{2}})\ \ \text{for}\ \ s=2, 3, 4.
  \end{equation}
 Set $\gamma=(C_0+C_1+C_2)^{-1}$, by $(\ref{final1})$ and $(\ref{final2})$, we have
  \begin{align*}
 |\{x\in\reals^n:|T_{b}(\vec{f})(x)|>\lambda\}|&\leq \sum\limits_{s=2}^{4}|E_s|+C(\gamma\lambda)^{-1/2}+CC_1\gamma^{\frac{1}{2}}\lambda^{-\frac{1}{2}}\\ \notag &\leq C(C_0+C_1+C_2)^{1/2}\lambda^{-1/2}.
  \end{align*}
The proof of (\ref{eq2}) is finished. Since we have reduced the proof of Theorem $\ref{thm1}$ to (\ref{eq2}), the proof of Theorem $\ref{thm1}$ is complete.
\end{proof}
\section{Proof of Thoerem~\ref{thm1itrated}}
\begin{proof}[Proof of Thoerem~\ref{thm1itrated}]
We will also only consider Theorem \ref{thm1itrated} for the case $m = 2$. Thus, it is sufficient to consider the following operator:
\begin{align*}
T_{\pi b}(f_1, f_2)(x)&=[b_1,[b_2, T]_2,]_1(f_1,f_2)\\ \notag
&=\int_{(\reals^n)^m}\prod_{j=1}^{2}\big(b_j(x)-b_j(y_j)\big)K(x, y_1,y_2)f_1(y_1) f_2(y_2)dy_1dy_2,
\end{align*}
where $f_j\in H^1(\reals^n) \ (j=1, 2)$ with $\norm{f_j}_{H^1(\reals^n)}=1$ for $j=1, 2$.
Since~$T_{\pi b}(f_1,f_2)(x)$~is bounded from~$L^{2}(\reals^n)\times L^{2}(\reals^n)$~into~$L^1(\reals^n)$ (see \cite {PPTT} for the case of the standard kernel $\omega(t)=t^\varepsilon$, and the C-Z kernel of $\omega$ type in \cite{yan}), we may set~$C_1'=\norm{T_{\pi b}}_{L^2\times L^2\rightarrow L^{1,\infty}}$. Recall~$C_2=\norm{T}_{L^1\times L^1\rightarrow L^{\frac{1}{2},\infty}}$, following the same argument as in the proof of Theorem \ref{thm1}, it is also sufficient to show that
\begin{equation}\label{eq2i}
|\{x\in\reals^n:|T_{\pi b}(f_1, f_2)(x)|>\lambda\}|\leq C(C_0+C_1'+C_2)^{1/2}\lambda^{-1/2}.
\end{equation}
The same decomposition for $f_j\in H^1(\reals^n) \ (j=1, 2)$ as in Theorem \ref{thm1} yields that
\begin{align}\label{decom}
&h_j=\sum \limits_{S_{j,l_j}}\sum \limits_{Q_{j,k_j}\subset S_{j,l_j}}\lambda_{Q_{j,k_j}}a_{Q_{j,k_j}},\ \ \ \ f_j(x)=g_j(x)+h_j(x),
\end{align}
where $g_j$ and $h_j$ enjoy the same properties as in Theorem \ref{thm1}.\par
 With abuse of notations, we may still set
\begin{align*}
&E_1=\left\{x\in\reals^n: |T_{\pi b}(g_1, g_2)(x)| >\lambda/4\right\};\\
&E_2=\left\{x\in\reals^n\backslash S^*: |T_{\pi b}(g_1, h_2)(x)| >\lambda/4\right\};\\
&E_3=\left\{x\in\reals^n\backslash S^*: |T_{\pi b}(h_1, g_2)(x)| >\lambda/4\right\};\\
&E_{4}=\left\{x\in\reals^n\backslash S^*: |T_{\pi b}(h_1, h_2)(x)| >\lambda/4\right\}.
 \end{align*}
Then, $(\ref{usedlatter1})$ still implies that \[|S^*|\leq C(\gamma\lambda)^{-1/2}.\]
  Note that ~$C_1'=\norm{T_{\pi b}}_{L^2\times L^2\rightarrow L^{1,\infty}}$, repeating the arguments as in the estimates of $(\ref{usedlatter2})$, we may obtain
  \[|E_1|\leq CC_1'\gamma^{\frac{1}{2}}\lambda^{-\frac{1}{2}}.\]
  Therefore,
 \begin{align*}
 |\{x\in\reals^n:|T_{\pi b}(\vec{f})(x)|>\lambda\}|&\leq \sum\limits_{s=1}^{4}|E_s|+C|S^*|\\
 &\leq \sum\limits_{s=2}^{4}|E_s|+C(\gamma\lambda)^{-1/2}+CC_1\gamma^{\frac{1}{2}}\lambda^{-\frac{1}{2}}.
  \end{align*}
 Thus, to show Theorem \ref{thm1itrated} is true, we only have to show that
  \begin{equation}\label{lasti2i}
  |E_s|\leq C(C_0\gamma^{\frac{1}{2}}\lambda^{-\frac{1}{2}}+C_2^{-\frac{1}{2}}\lambda^{-\frac{1}{2}}),\ \ \text{for}\ \  s=2, 3, 4.
  \end{equation}
 In fact, let $\gamma=(C_0+C_1'+C_2)^{-\frac{1}{2}}$, it's easy to check that the inequality $(\ref{eq2i})$ is true.\\
 \noindent
 \bull
 \textbf {Estimate for $|E_2|$.} Employing the linearity of $T_{\pi b}$ and the atomic decomposition of $h_1$, we may decompose $T_{\pi b}(h_1, g_2)$ by:
\begin{align*}
&T_{\pi b}(h_1, g_2)(x)\\&=\int_{(\reals^n)^m}\prod_{j=1}^{2}\big(b_j(x)-b_j(y_j)\big)K(x, y_1,y_2)h_1(y_1) g_2(y_2)dy_1dy_2\\
&=\sum \limits_{S_{1,l_1}}\sum \limits_{Q_{1,k_1}\subset S_{1,l_1}}\lambda_{Q_{1,k_1}}\big(b_1(x)b_2(x)T(a_{Q_{1,k_1}},g_2)(x)-b_2(x)T(b_1a_{Q_{1,k_1}},g_2)(x)\\
&\quad-b_1(x)T(a_{Q_{1,k_1}}, b_2g_2)(x)+T(b_1a_{Q_{1,k_1}}, b_2g_2)(x)\big)\\
&=\sum \limits_{S_{1,l_1}}\sum \limits_{Q_{1,k_1}\subset S_{1,l_1}}\lambda_{Q_{1,k_1}}\big(b_1(x)-b_{1,Q_{1,k_1}}\big)\big(b_2(x)-b_{2,Q_{1,k_1}}\big)T(a_{Q_{1,k_1}},g_2)(x)\\
&\quad-\sum \limits_{S_{1,l_1}}\sum \limits_{Q_{1,k_1}\subset S_{1,l_1}}\lambda_{Q_{1,k_1}}\big(b_2(x)-b_{2,Q_{1,k_1}}\big)T((b_1-b_{1,Q_{1,k_1}})a_{Q_{1,k_1}},g_2)(x)\\
&\quad-\sum \limits_{S_{1,l_1}}\sum \limits_{Q_{1,k_1}\subset S_{1,l_1}}\lambda_{Q_{1,k_1}}\big(b_1(x)-b_{1,Q_{1,k_1}}\big)T(a_{Q_{1,k_1}}, (b_2-b_{2,Q_{1,k_1}})g_2)(x)\\
&\quad+\sum\limits_{S_{1,l_1}}\sum \limits_{Q_{1,k_1}\subset S_{1,l_1}}\lambda_{Q_{1,k_1}}T((b_1-b_{1,Q_{1,k_1}})a_{Q_{1,k_1}}, (b_2-b_{2,Q_{1,k_1}})g_2)(x)\\
&=:I_{2,1}(x)+I_{2,2}(x)+I_{2,3}(x)+I_{2,4}(x).
\end{align*}
Thus, the contributions of $E_2$ can be divide into four parts.
\begin{align*}
 |E_2|&=|\{x\in\reals^n:|T_{\pi b}(g_1,h_2)(x)|>\lambda/4\}|\\
 &\leq |\{x\in\reals^n:|I_{2,1}(x)|>\lambda/16\}|+|\{x\in\reals^n:|I_{2,2}(x)|>\lambda/16\}|\\
 &\quad+|\{x\in\reals^n:|I_{2,3}(x)|>\lambda/16\}|+|\{x\in\reals^n:|I_{2,4}(x)|>\lambda/16\}|\\
 &=:|E_{2,1}|+|E_{2,2}|+|E_{2,3}|+|E_{2,4}|.
\end{align*}
 By the definition of $I_{2,1}$ and the moment condition of $H^1$-atoms, we have
 \begin{align*}
I_{2,1}(x)&=\sum \limits_{S_{1,l_1}}\sum \limits_{Q_{1,k_1}\subset S_{1,l_1}}\lambda_{Q_{1,k_1}}\big(b_1(x)-b_{1,Q_{1,k_1}}\big)\big(b_2(x)-b_{2,Q_{1,k_1}}\big)\\
&\quad\times\iint_{(\reals^n)^2}\big(K(x,y_1,y_2)-K(x,c_{1,k_1},y_2)\big)a_{Q_{1,k_1}}(y_1)g_2(y_2)dy_1dy_2.
  \end{align*}
  Putting the above identity into the definition of $|E_{2,1}|$ and note that $\norm{g_2}_{L^{\infty}(\reals^n)}\leq(\gamma\lambda)^{1/2}$, $\reals^n\backslash S^*\subset \cup_{i=1}^{\infty}\mathscr{R}_{1, k_1}^i$, together with the Chebyshev inequality and condition $(\ref{k-H2s})$, we have
 \begin{align}\label{i213}
 |E_{2,1}|&\leq\frac{16}{\lambda}\sum \limits_{S_{1,l_1}}\sum \limits_{Q_{1,k_1}\subset S_{1,l_1}}|\lambda_{Q_{1,k_1}}|\int_{(S^*)^c}\iint_{(\reals^n)^2}|b_1(x)-b_{1,Q_{1,k_1}}||b_2(x)-b_{2,Q_{1,k_1}}|\\ \notag
&\quad\times|K(x,y_1,y_2)-K(x,c_{1,k_1},y_2)||a_{Q_{1,k_1}}(y_1)||g_2(y_2)|dy_1dy_2dx\\ \notag
&\leq CC_0\lambda^{1/2}\gamma^{-1/2}\sum \limits_{S_{1,l_1}}\sum \limits_{Q_{1,k_1}\subset S_{1,l_1}}|\lambda_{Q_{1,k_1}}|\sum\limits_{i=1}^{\infty}\int_{\mathscr{R}_{1, k_1}^i}\iint_{(\reals^n)^2}|b_1(x)-b_{1,Q_{1,k_1}}|\\ \notag
&\quad\times|b_2(x)-b_{2,Q_{1,k_1}}|\frac{|a_{1, k_1}(y_1)|}{(|x-y_1|+|x-y_2|)^{2n}}\omega\big(\frac{|y_1-c_{1,k_1}|}{|x-y_1|+|x-y_2|}\big)dy_1dy_2dx.
  \end{align}
  By $(\ref{ineq-h})$ and the non-decreasing property of $\omega$, we have
\begin{align}\label{i21}
   |E_{2,1}|&\leq CC_0\gamma^{1/2}\lambda^{-1/2}\sum \limits_{S_{1,l_1}}\sum \limits_{Q_{1,k_1}\subset S_{1,l_1}}|\lambda_{Q_{1,k_1}}|\sum\limits_{i=1}^{\infty}\int_{\mathscr{R}_{1, k_1}^i}\iint_{(\reals^n)^2}|b_1(x)-b_{1,Q_{1,k_1}}|\\ \notag
&\quad\times|(b_2(x)-b_{2,Q_{1,k_1}}|\frac{|a_{1, k_1}(y_1)|}{(|x-y_1|+|x-y_2|)^{2n}}\omega(2^{-i})dy_1dy_2dx\\ \notag
&\leq CC_0\gamma^{1/2}\lambda^{-1/2}\sum \limits_{S_{1,l_1}}\sum \limits_{Q_{1,k_1}\subset S_{1,l_1}}|\lambda_{Q_{1,k_1}}|\sum\limits_{i=1}^{\infty}\int_{(S^*)^c\cap\mathscr{R}_{1, k_1}^i}\iint_{(\reals^n)^2}|b_1(x)-b_{1,Q_{1,k_1}}|\\ \notag
&\quad\times|b_2(x)-b_{2,Q_{1,k_1}}|\frac{|a_{1, k_1}(y_1)|}{|x-y_1|^{n}}\omega(2^{-i})dy_1dx\\ \notag
&\leq CC_0\gamma^{1/2}\lambda^{-1/2}\sum \limits_{S_{1,l_1}}\sum \limits_{Q_{1,k_1}\subset S_{1,l_1}}|\lambda_{Q_{1,k_1}}|\sum\limits_{i=1}^{\infty}\int_{(S^*)^c\cap\mathscr{R}_{1, k_1}^i}\int_{\reals^n}|b_1(x)-b_{1,Q_{1,k_1}}|\\ \notag
&\quad\times|b_2(x)-b_{2,Q_{1,k_1}}|\frac{|a_{1, k_1}(y_1)|}{|2^{i+2}\sqrt{n}Q_{1,k_1}|}\omega(2^{-i})dy_1dx\\ \notag &
\leq CC_0\gamma^{1/2}\lambda^{-1/2}\sum \limits_{S_{1,l_1}}\sum \limits_{Q_{1,k_1}\subset S_{1,l_1}}|\lambda_{Q_{1,k_1}}|\sum\limits_{i=1}^{\infty}\omega(2^{-i})\frac{1}{|2^{i+2}\sqrt{n}Q_{1,k_1}|}\end{align}
\begin{align*}
&\quad\times\int_{\mathscr{R}_{1, k_1}^i}|b_1(x)-b_{1,Q_{1,k_1}}||b_2(x)-b_{2,Q_{1,k_1}}|dx.
\end{align*}
By the H\"{o}lder inequality, one obtains
 \begin{align}\label{i212}
 &\frac{1}{|2^{i+2}\sqrt{n}Q_{1,k_1}|}\int_{\mathscr{R}_{1,k_1}^i}|b_1(x)-b_{1,Q_{1,k_1}}||b_2(x)-b_{2,Q_{1,k_1}}|dx\\ \notag
 &\leq\bigg(\frac{1}{|2^{i+2}\sqrt{n}Q_{1,k_1}|}\int_{2^{i+2}\sqrt{n}Q_{1,k_1}}|b_1(x)-b_{1,Q_{1,k_1}}|^2 dx\bigg)^{1/2}\\ \notag
 &\quad\times\bigg(\frac{1}{|2^{i+2}\sqrt{n}Q_{1,k_1}|}\int_{2^{i+2}\sqrt{n}Q_{1,k_1}}|b_2(x)-b_{2,Q_{1,k_1}}|^2 dx\bigg)^{1/2}\\ \notag
 &\leq Ci\norm{b}_*.
 \end{align}
 Combing $(\ref{i21})$ and $(\ref{i212})$, we get
 \[|E_{2,1}|\leq CC_0\gamma^{1/2}\lambda^{-1/2}\sum \limits_{S_{1,l_1}}\sum \limits_{Q_{1,k_1}\subset S_{1,l_1}}|\lambda_{Q_{1,k_1}}|\sum\limits_{i=1}^{\infty}\omega(2^{-i})i\leq CC_0\gamma^{1/2}\lambda^{-1/2}.\]
 \qquad Now we begin to estimate $|E_{2,2}|$.\par
\quad Similarly as we deal $|E_{2,1}|$, and together with the size condition of $H^1$-atoms, it follows that
  \begin{align*}
  |E_{2,2}|&\leq CC_0\gamma^{1/2}\lambda^{-1/2}\sum \limits_{S_{1,l_1}}\sum \limits_{Q_{1,k_1}\subset S_{1,l_1}}|\lambda_{Q_{1,k_1}}|\sum\limits_{i=1}^{\infty}\int_{(S^*)^c\cap\mathscr{R}_{1, k_1}^i}\iint_{(\reals^n)^2}|b_1(y_1)-b_{1,Q_{1,k_1}}|\\ \notag
&\quad\times|b_2(x)-b_{2,Q_{1,k_1}}|\frac{|a_{Q_{1, k_1}}(y_1)|}{(|x-y_1|+|x-y_2|)^{2n}}\omega\big(\frac{|y_1-c_{1,k_1}|}{|x-y_1|+|x-y_2|}\big)dy_1dy_2dx\\
&\leq CC_0\gamma^{1/2}\lambda^{-1/2}\sum \limits_{S_{1,l_1}}\sum \limits_{Q_{1,k_1}\subset S_{1,l_1}}|\lambda_{Q_{1,k_1}}|\sum\limits_{i=1}^{\infty}\int_{(S^*)^c\cap\mathscr{R}_{1, k_1}^i}\int_{\reals^n}|b_1(y_1)-b_{1,Q_{1,k_1}}|\\ \notag
&\quad\times|b_2(x)-b_{2,Q_{1,k_1}}|\frac{1}{(|x-y_1|)^{n}|Q_{1, k_1}|}\omega(2^{-i})dy_1dx\\
&\leq CC_0\gamma^{1/2}\lambda^{-1/2}\norm{b_1}_*\sum \limits_{S_{1,l_1}}\sum \limits_{Q_{1,k_1}\subset S_{1,l_1}}|\lambda_{Q_{1,k_1}}|\sum\limits_{i=1}^{\infty}\omega(2^{-i})\frac{1}{(|2^{i+2}Q_{1,k_1}|)^{n}}\\ \notag
&\quad\times\int_{(S^*)^c\cap\mathscr{R}_{1, k_1}^i}|b_2(x)-b_{2,Q_{1,k_1}}|dx\\
&\leq CC_0\gamma^{1/2}\lambda^{-1/2}\norm{b_1}_*\norm{b_2}_*\sum \limits_{S_{1,l_1}}\sum \limits_{Q_{1,k_1}\subset S_{1,l_1}}|\lambda_{Q_{1,k_1}}|\sum\limits_{i=1}^{\infty}\omega(2^{-i})i\\
&\leq CC_0\gamma^{1/2}\lambda^{-1/2}.
\end{align*}
The estimate for $|E_{2,3}|$ is more complicated, and we need to split the domain of variable $y_2$. First, similarly as we deal with $|E_{2,1}|$ in (\ref{i213}) and (\ref{i21}), we may get
\begin{align*}
|E_{2,3}|&\leq CC_0\gamma^{1/2}\lambda^{-1/2}\sum \limits_{S_{1,l_1}}\sum \limits_{Q_{1,k_1}\subset S_{1,l_1}}|\lambda_{Q_{1,k_1}}|\sum\limits_{i=1}^{\infty}\int_{(S^*)^c\cap\mathscr{R}_{1, k_1}^i}\iint_{(\reals^n)^2}|b_1(x)-b_{1,Q_{1,k_1}}|\\ \notag
&\quad\times|b_2(y_2)-b_{2,Q_{1,k_1}}|\frac{|a_{Q_{1, k_1}}(y_1)|}{(|x-y_1|+|x-y_2|)^{2n}}\omega\big(\frac{|y_1-c_{1,k_1}|}{|x-y_1|+|x-y_2|}\big)dy_1dy_2dx.
\end{align*}
  Denote $\mathscr{R}_{1, k_1}^h=(2^{h+2}\sqrt{n}Q_{1, k_1})\backslash (2^{h+1}\sqrt{n}Q_{1, k_1})$ and recall that $Q_{1, k_1}^*=4\sqrt{n}Q_{1, k_1}$, then \[y_2\in\reals^n\subset\big(\cup_{h=1}^{\infty}\mathscr{R}_{1, k_1}^h\big)\cup Q_{1, k_1}^*.\]
  Thus $|E_{2,3}|$ can be controlled by
   \begin{align*}
 &CC_0\gamma^{1/2}\lambda^{-1/2}\sum \limits_{S_{1,l_1}}\sum \limits_{Q_{1,k_1}\subset S_{1,l_1}}|\lambda_{Q_{1,k_1}}|\sum\limits_{i=1}^{\infty}\int_{(S^*)^c\cap\mathscr{R}_{1, k_1}^i}\int_{\cup_{i=1}^{\infty}\mathscr{R}_{1, k_1}^h}\int_{\reals^n}|b_1(x)-b_{1,Q_{1,k_1}}|\\ \notag
&\quad\times|b_2(y_2)-b_{2,Q_{1,k_1}}|\frac{|a_{Q_{1, k_1}}(y_1)|}{(|x-y_1|+|x-y_2|)^{2n}}\omega\big(\frac{|y_1-c_{1,k_1}|}{|x-y_1|+|x-y_2|}\big)dy_1dy_2dx\\
&\quad+CC_0\gamma^{1/2}\lambda^{-1/2}\sum \limits_{S_{1,l_1}}\sum \limits_{Q_{1,k_1}\subset S_{1,l_1}}|\lambda_{Q_{1,k_1}}|\sum\limits_{i=1}^{\infty}\int_{(S^*)^c\cap\mathscr{R}_{1, k_1}^i}\int_{Q_{1, k_1}^*}\int_{\reals^n}|b_1(x)-b_{1,Q_{1,k_1}}|\\ \notag
&\quad\times|b_2(y_2)-b_{2,Q_{1,k_1}}|\frac{|a_{Q_{1, k_1}}(y_1)||}{(|x-y_1|+|x-y_2|)^{2n}}\omega\big(\frac{|y_1-c_{1,k_1}|}{|x-y_1|+|x-y_2|}\big)dy_1dy_2dx\\
&=:|E_{2,3}^1|+|E_{2,3}^2|.
  \end{align*}
  For any $h\in \mathbb{N}$, if $y_2\in \mathscr{R}_{1, k_1}^h$, note that $y_1\in Q_{1,k_1}$, then \[|x-y_1|+|x-y_2|\geq|y_1-y_2|\sim |y_2-c_{1,k_1}|\sim l_{2^{h+2}Q_{1,k_1}}.\]
  On the other hand, for any $i\in \mathbb{N}$, if $x\in \mathscr{R}_{1, k_1}^i$ and $y_1\in Q_{1,k_1}$, then
  \begin{equation}\label{i221}|x-y_1|+|x-y_2|\geq|x-y_1|\sim l_{2^{i+2}Q_{1,k_1}}.\end{equation}
 By the geometric properties of $y_1$, $y_2$, $x$ above, we may obtain
   \begin{align}\label{i22}
   &|E_{2,3}^1|\\\notag &\leq CC_0\gamma^{1/2}\lambda^{-1/2}\sum \limits_{S_{1,l_1}}\sum \limits_{Q_{1,k_1}\subset S_{1,l_1}}|\lambda_{Q_{1,k_1}}|\sum\limits_{i=1}^{\infty}\sum\limits_{h=1}^{\infty}\int_{(S^*)^c\cap\mathscr{R}_{1, k_1}^i}\int_{\mathscr{R}_{1, k_1}^h}\int_{\reals^n}|b_1(x)-b_{1,Q_{1,k_1}}|\\ \notag
&\quad\times|b_2(y_2)-b_{2,Q_{1,k_1}}|\frac{|a_{Q_{1, k_1}}(y_1)|}{(|x-y_1|+|x-y_2|)^{2n}}\omega\big(\frac{|y_1-c_{1,k_1}|}{|x-y_1|+|x-y_2|}\big)dy_1dy_2dx\\ \notag
&\leq CC_0\gamma^{1/2}\lambda^{-1/2}\sum \limits_{S_{1,l_1}}\sum \limits_{Q_{1,k_1}\subset S_{1,l_1}}|\lambda_{Q_{1,k_1}}|\sum\limits_{i=1}^{\infty}\sum\limits_{h=1}^{\infty}\int_{(S^*)^c\cap\mathscr{R}_{1, k_1}^i}\int_{\mathscr{R}_{1, k_1}^h}\int_{\reals^n}|b_1(x)-b_{1,Q_{1,k_1}}|  \end{align}
\begin{align*}
\quad\times|b_2(y_2)-b_{2,Q_{1,k_1}}|\frac{|a_{Q_{1, k_1}}(y_1)|}{|2^{i+2}Q_{1,k_1}||2^{h+2}Q_{1,k_1}|}\omega(2^{-i})^{1/2}\omega(2^{-h})^{1/2}dy_1dy_2dx.
  \end{align*}
It is easy to see that
\begin{align}\label{esti y_2}
\sum\limits_{h=1}^{\infty}\omega(2^{-h})^{1/2}\int_{\mathscr{R}_{1, k_1}^h}\frac{|b_2(y_2)-b_{2,Q_{1,k_1}}|}{|2^{h+2}Q_{1,k_1}|}dy_2\leq C\sum\limits_{h=1}^{\infty}\omega(2^{-h})^{1/2}h\norm{b_2}_*\leq C.
\end{align}
Since $a(y_1)\in L^1(\reals^n)$, putting the above estimate into ($\ref{i22}$), we have
\begin{align*}
|E_{2,3}^1|&\leq CC_0\gamma^{1/2}\lambda^{-1/2}\sum \limits_{S_{1,l_1}}\sum \limits_{Q_{1,k_1}\subset S_{1,l_1}}|\lambda_{Q_{1,k_1}}|\sum\limits_{i=1}^{\infty}\omega(2^{-i})^{1/2}\int_{2^{i+2}Q_{1,k_1}}\frac{|b_1(x)-b_{1,Q_{1,k_1}}|}{|2^{i+2}Q_{1,k_1}|}dx\\
&\leq CC_0\gamma^{1/2}\lambda^{-1/2}\sum \limits_{S_{1,l_1}}\sum \limits_{Q_{1,k_1}\subset S_{1,l_1}}|\lambda_{Q_{1,k_1}}|\sum\limits_{i=1}^{\infty}\omega(2^{-i})^{1/2}i\norm{b_1}_*\\
&\leq CC_0\gamma^{1/2}\lambda^{-1/2}.
\end{align*}
  If $y_2\in Q_{1, k_1}^*$, note that $x\in(8\sqrt{n}Q_{1, k_1})^c$, then \[|x-y_1|+|x-y_2|\geq|x-y_2|\geq Cl_{Q_{1,k_1}}.\]
  By the definition of $|E_{2,3}^2|$ and ($\ref{i221}$), we have
\begin{align*}
|E_{2,3}^2|&\leq CC_0\gamma^{1/2}\lambda^{-1/2}\sum \limits_{S_{1,l_1}}\sum \limits_{Q_{1,k_1}\subset S_{1,l_1}}|\lambda_{Q_{1,k_1}}|\sum\limits_{i=1}^{\infty}\int_{(S^*)^c\cap\mathscr{R}_{1, k_1}^i}\int_{Q_{1, k_1}^*}\int_{\reals^n}|b_1(x)-b_{1,Q_{1,k_1}}|\\ \notag
&\quad\times|b_2(y_2)-b_{2,Q_{1,k_1}}|\frac{|a_{Q_{1, k_1}}(y_1)|}{|2^{i+2}Q_{1,k_1}||Q_{1, k_1}^*|}\omega(2^{-i})dy_1dy_2dx\\
&\leq CC_0\gamma^{1/2}\lambda^{-1/2}\sum \limits_{S_{1,l_1}}\sum \limits_{Q_{1,k_1}\subset S_{1,l_1}}|\lambda_{Q_{1,k_1}}|\sum\limits_{i=1}^{\infty}\int_{2^{i+2}Q_{1,k_1}}\int_{\reals^n}|b_1(x)-b_{1,Q_{1,k_1}}|\\ \notag
&\quad\times\frac{|a_{Q_{1, k_1}}(y_1)|}{|2^{i+2}Q_{1,k_1}|}\omega(2^{-i})dy_1dx\\
&\leq CC_0\gamma^{1/2}\lambda^{-1/2}\sum \limits_{S_{1,l_1}}\sum \limits_{Q_{1,k_1}\subset S_{1,l_1}}|\lambda_{Q_{1,k_1}}|\sum\limits_{i=1}^{\infty}\omega(2^{-i})\frac{1}{|2^{i+2}Q_{1,k_1}|}\\ \notag
&\quad\times\int_{2^{i+2}Q_{1,k_1}}|b_1(x)-b_{1,Q_{1,k_1}}|dx\\
&\leq CC_0\gamma^{1/2}\lambda^{-1/2}\sum \limits_{S_{1,l_1}}\sum \limits_{Q_{1,k_1}\subset S_{1,l_1}}|\lambda_{Q_{1,k_1}}|\sum\limits_{i=1}^{\infty}\omega(2^{-i})i\norm{b_1}_*\\
&\leq CC_0\gamma^{1/2}\lambda^{-1/2}.
  \end{align*}
  Hence, we have shown that
  \[|E_{2,3}|\leq|E_{2,3}^1|+|E_{2,3}^2|\leq CC_0\gamma^{1/2}\lambda^{-1/2}.\]
Now we begin to consider $|E_{2,4}|$. Similarly,
 \begin{align*}
|E_{2,4}|&\leq CC_0\gamma^{1/2}\lambda^{-1/2}\sum \limits_{S_{1,l_1}}\sum \limits_{Q_{1,k_1}\subset S_{1,l_1}}|\lambda_{Q_{1,k_1}}|\sum\limits_{i=1}^{\infty}\int_{(S^*)^c\cap\mathscr{R}_{1, k_1}^i}\iint_{(\reals^n)^2}|b_1(y_1)-b_{1,Q_{1,k_1}}|\\ \notag
&\quad\times|b_2(y_2)-b_{2,Q_{1,k_1}}|\frac{|a_{Q_{1, k_1}}(y_1)|}{(|x-y_1|+|x-y_2|)^{2n}}\omega\big(\frac{|y_1-c_{1,k_1}|}{|x-y_1|+|x-y_2|}\big)dy_1dy_2dx
\end{align*}
  Repeating the same steps as in the estimate of $|E_{2,3}|$, we have
 \begin{align*}
  |E_{2,4}|&\leq CC_0\gamma^{1/2}\lambda^{-1/2}\sum \limits_{S_{1,l_1}}\sum \limits_{Q_{1,k_1}\subset S_{1,l_1}}|\lambda_{Q_{1,k_1}}|\sum\limits_{i=1}^{\infty}\int_{(S^*)^c\cap\mathscr{R}_{1, k_1}^i}\int_{\cup_{i=1}^{\infty}\mathscr{R}_{1, k_1}^h}\int_{\reals^n}|b_1(y_1)-b_{1,Q_{1,k_1}}|\\ \notag
&\quad\times|b_2(y_2)-b_{2,Q_{1,k_1}}|\frac{|a_{Q_{1, k_1}}(y_1)|}{(|x-y_1|+|x-y_2|)^{2n}}\omega\big(\frac{|y_1-c_{1,k_1}|}{|x-y_1|+|x-y_2|}\big)dy_1dy_2dx\\
&\quad+CC_0\gamma^{1/2}\lambda^{-1/2}\sum \limits_{S_{1,l_1}}\sum \limits_{Q_{1,k_1}\subset S_{1,l_1}}|\lambda_{Q_{1,k_1}}|\sum\limits_{i=1}^{\infty}\int_{(S^*)^c\cap\mathscr{R}_{1, k_1}^i}\int_{Q_{1, k_1}^*}\int_{\reals^n}|b_1(y_1)-b_{1,Q_{1,k_1}}|\\ \notag
&\quad\times|b_2(y_2)-b_{2,Q_{1,k_1}}|\frac{|a_{Q_{1, k_1}}(y_1)|}{(|x-y_1|+|x-y_2|)^{2n}}\omega\big(\frac{|y_1-c_{1,k_1}|}{|x-y_1|+|x-y_2|}\big)dy_1dy_2dx\\
&=:|E_{2,4}^1|+|E_{2,4}^2|.
  \end{align*}
  By the definition of $|E_{2,4}^1|$, one may obtain
 \begin{align*}
|E_{2,4}^1|&\leq CC_0\gamma^{1/2}\lambda^{-1/2}\sum \limits_{S_{1,l_1}}\sum \limits_{Q_{1,k_1}\subset S_{1,l_1}}|\lambda_{Q_{1,k_1}}|\sum\limits_{i=1}^{\infty}\sum\limits_{h=1}^{\infty}\int_{(S^*)^c\cap\mathscr{R}_{1, k_1}^i}\int_{\mathscr{R}_{1, k_1}^h}\int_{\reals^n}|b_1(x)-b_{1,Q_{1,k_1}}|\\ \notag
&\quad\times|b_2(y_2)-b_{2,Q_{1,k_1}}|\frac{|a_{Q_{1, k_1}}(y_1)|}{|x-y_1|^n|2^{h+2}Q_{1,k_1}|}\omega\bigg(\frac{y_1-c_{1,k_1}}{|x-y_1|}\bigg)^{1/2}\omega(2^{-h})^{1/2}dy_1dy_2dx.
\end{align*}
By $(\ref{esti y_2})$, and integral for $x$ firstly, we have
\begin{align*}
 |E_{2,4}^1|
&\leq CC_0\gamma^{1/2}\lambda^{-1/2}\sum \limits_{S_{1,l_1}}\sum \limits_{Q_{1,k_1}\subset S_{1,l_1}}|\lambda_{Q_{1,k_1}}|\sum\limits_{i=1}^{\infty}\int_{\mathscr{R}_{1, k_1}^i}\int_{Q_{1,k_1}}\frac{|b_1(y_1)-b_{1,Q_{1,k_1}}|}
{|Q_{1,k_1}||x-y_1|^n}\\
&\quad\times\omega\bigg(\frac{y_1-c_{1,k_1}}{|x-y_1|}\bigg)^{1/2}dy_1dx\\
&\leq CC_0\gamma^{1/2}\lambda^{-1/2}\sum \limits_{S_{1,l_1}}\sum \limits_{Q_{1,k_1}\subset S_{1,l_1}}|\lambda_{Q_{1,k_1}}|\int_{Q_{1,k_1}}\frac{|b_1(y_1)-b_{1,Q_{1,k_1}}|}
{|Q_{1,k_1}|}dy_1\\
&\leq CC_0\gamma^{1/2}\lambda^{-1/2}\sum \limits_{S_{1,l_1}}\sum \limits_{Q_{1,k_1}\subset S_{1,l_1}}|\lambda_{Q_{1,k_1}}|\norm{b_1}_*\\
&\leq CC_0\gamma^{1/2}\lambda^{-1/2}.
 \end{align*}
 The estimate for $|E_{2,4}^2|$ is quite similar to $|E_{2,3}^2|$, we may get $|E_{2,4}^2|\leq CC_0\gamma^{1/2}\lambda^{-1/2}$.\\ \\
 \noindent
 \bull
 \textbf {Estimate for $|E_3|$.} Since $|E_3|$ is a symmetrical case of $|E_2|$, then
 \[|E_{3}|\leq CC_0\gamma^{1/2}\lambda^{-1/2}.\]
 \noindent
 \bull
 \textbf {Estimate for $|E_4|$.}
 \begin{align*}
T_{\Pi b}(h_1,h_2)&=[b_1,[b_2, T]_2,]_1(h_1,h_2)\\ \notag
&=\int_{(\reals^n)^m}\prod_{j=1}^{2}\big(b_j(x)-b_j(y_j)\big)K(x, y_1,y_2)h_1(y_1) h_2(y_2)dy_1dy_2\\
&=\sum \limits_{S_{1,l_1}}\sum \limits_{Q_{1,k_1}\subset S_{1,l_1}}\sum \limits_{S_{2,l_2}}\sum \limits_{Q_{2,k_2}\subset S_{2,l_2}}\lambda_{Q_{1,k_1}}\lambda_{Q_{2,k_2}}\big(b_1(x)-b_{1,Q_{1,k_1}}\big)\big(b_2(x)-b_{2,Q_{1,k_1}}\big)\\
&\quad\times T(a_{Q_{1,k_1}},a_{Q_{2,k_2}})(x)\\
&\quad-\sum \limits_{S_{1,l_1}}\sum \limits_{Q_{1,k_1}\subset S_{1,l_1}}\sum \limits_{S_{2,l_2}}\sum \limits_{Q_{2,k_2}\subset S_{2,l_2}}\lambda_{Q_{1,k_1}}\lambda_{Q_{2,k_2}}\big(b_2(x)-b_{2,Q_{1,k_1}}\big)\\&\quad\times T((b_1-b_{1,Q_{1,k_1}})a_{Q_{1,k_1}},a_{Q_{2,k_2}})(x)\\
&\quad-\sum \limits_{S_{1,l_1}}\sum \limits_{Q_{1,k_1}\subset S_{1,l_1}}\sum \limits_{S_{2,l_2}}\sum \limits_{Q_{2,k_2}\subset S_{2,l_2}}\lambda_{Q_{1,k_1}}\lambda_{Q_{2,k_2}}\big(b_1(x)-b_{1,Q_{1,k_1}}\big)\\&\quad\times T(a_{Q_{1,k_1}}, (b_2-b_{2,Q_{1,k_1}})a_{Q_{2,k_2}})(x)\\
&\quad+\sum \limits_{S_{1,l_1}}\sum \limits_{Q_{1,k_1}\subset S_{1,l_1}}\sum \limits_{S_{2,l_2}}\sum \limits_{Q_{2,k_2}\subset S_{2,l_2}}\lambda_{Q_{1,k_1}}\lambda_{Q_{2,k_2}}\\
&\quad\times T((b_1-b_{1,Q_{1,k_1}})a_{Q_{1,k_1}}, (b_2-b_{2,Q_{1,k_1}})a_{Q_{2,k_2}})(x)\\
&=:I_{4,1}(x)+I_{4,2}(x)+I_{4,3}(x)+I_{4,4}(x).
\end{align*}
Thus, we obtain
 \begin{align*}
 |E_4|&=|\{x\in\reals^n/S^*:|T_{\pi b}(h_1,h_2)(x)|>\lambda/4\}|\\
 &\leq |\{x\in\reals^n/S^*:|I_{4,1}(x)|>\lambda/16\}|+|\{x\in\reals^n/S^*:|I_{4,2}(x)|>\lambda/16\}|\\
 &\quad+|\{x\in\reals^n/S^*:|I_{4,3}(x)|>\lambda/16\}|+|\{x\in\reals^n/S^*:|I_{4,4}(x)|>\lambda/16\}|\\
 &=:|E_{4,1}|+|E_{4,2}|+|E_{4,3}|+|E_{4,4}|.
 \end{align*}
 Now we begin to consider $|E_{4,1}|$. By the definition of $I_{4,1}(x)$ , we can write
    \begin{align*}
 |I_{4,1}(x)|&\leq\sum \limits_{S_{1,l_1}}\sum \limits_{Q_{1,k_1}\subset S_{1,l_1}}\sum \limits_{S_{2,l_2}}\sum \limits_{Q_{2,k_2}\subset S_{2,l_2}}|\lambda_{Q_{1,k_1}}||\lambda_{Q_{2,k_2}}|\bigg|\iint_{(\reals^n)^2}\big(b_1(x)-b_{1,Q_{1,k_1}}\big)\\
&\quad\times\big(b_2(x)-b_{2,Q_{1,k_1}}\big)K(x,y_1,y_2)a_{Q_{1,k_1}}(y_1)
a_{Q_{2,k_2}}(y_2)dy_1dy_2\bigg|.
  \end{align*}

 Fix for a moment $k_1$, $k_2$ and assume, without loss of generality, that $l(Q_{1,k_1})\leq l(Q_{2,k_2})$. By the moment condition of $H^1$-atoms and the regularity condition $(\ref{k-H2s})$ of the kernel $K$, we have
  \begin{align*}
  &\bigg|\int_{\reals^n}K(x,y_1,y_2)a_{1,k_1}(y_1)dy_1\bigg|=
 \bigg|\int_{\reals^n}\big(K(x,y_1,y_2)-K(x,c_{1,k_1},y_2)\big)a_{1,k_1}(y_1)dy_1\bigg|\\
 &\leq \bigg|\int_{\reals^n}\frac{C_0}{(|x-y_1|+|x-y_2|)^{2n}}
 \omega\big(\frac{|y_1-c_{1,k_1}|}{|x-y_1|+|x-y_2|}
\big)a_{Q_{1,k_1}}(y_1)dy_1\bigg|.
  \end{align*}
  Recalling the definition of $\mathscr{R}_{1, k_1}^i$, $\mathscr{R}_{2, k_2}^h$, and note that $y_1\in Q_{1,k_1}$, $y_2\in Q_{2,k_2}$, it's obvious that, for any fixed $i, h, k_1, k_2$, if $x\in(S^*)^c\cap\mathscr{R}_{1, k_1}^i\cap\mathscr{R}_{2, k_2}^h$, then we have
  \[|x-y_1|\sim 2^il_{Q_{1,k_1}},\quad|x-y_2|\sim 2^hl_{Q_{2,k_2}}.\]
  This and the nondecreasing property of $\omega$ give
   \begin{align*}
   \frac{\omega\big(\frac{|y_1-c_{1,k_1}|}{|x-y_1|+|x-y_2|}\big)^{\frac{1}{2}}}{(|x-y_1|+|x-y_2|)^{n}}\leq
   \frac{\omega\big(\frac{l_{Q_{1,k_1}}}{|x-y_1|+|x-y_2|}\big)^{\frac{1}{2}}}{(|x-y_1|+|x-y_2|)^{n}}\lesssim
  \prod_{i=1}^2\frac{\omega\big(\frac{l_{Q_{i,k_i}}}{|x-y_i|}\big)^{\frac{1}{4}}}{|x-y_i|^{\frac{n}{2}}}\lesssim \frac{\omega(2^{-i})^{\frac{1}{4}}\omega(2^{-h})^{\frac{1}{4}}}{(2^il_{Q_{1,k_1}}2^hl_{Q_{2,k_2}})^{\frac{n}{2}}}. \end{align*}
  By $(\ref{key})$, Chebychev inequality and the estimate above, we control $|E_{4,1}|$ by
\begin{align}\label{i42}&\frac{CC_0}{\lambda}\sum \limits_{S_{1,l_1}}\sum \limits_{Q_{1,k_1}\subset S_{1,l_1}}\sum \limits_{S_{2,l_2}}\sum \limits_{Q_{2,k_2}\subset S_{2,l_2}}\sum\limits_{i=1}^{\infty}\sum\limits_{h=1}^{\infty}|\lambda_{Q_{1,k_1}}||\lambda_{Q_{2,k_2}}|\int_{(S^*)^c\cap\mathscr{R}_{1, k_1}^i\cap\mathscr{R}_{2, k_2}^h}\\ \notag
&\quad\times \iint_{(\reals^n)^2}|b_1(x)-b_{1,Q_{1,k_1}}||b_2(x)-b_{2,Q_{1,k_1}}|\frac{|a_{Q_{1,k_1}}(y_1)||a_{Q_{2,k_2}}(y_2)|}{(|x-y_1|+|x-y_2|)^{2n}}\\ \notag
&\quad\times\omega\big(\frac{|y_1-c_{1,k_1}|}{|x-y_1|+|x-y_2|}\big)dy_1dy_2dx\\ \notag
&\leq\frac{CC_0}{\lambda}\sum \limits_{S_{1,l_1}}\sum \limits_{Q_{1,k_1}\subset S_{1,l_1}}\sum \limits_{S_{2,l_2}}\sum \limits_{Q_{2,k_2}\subset S_{2,l_2}}\sum\limits_{i=1}^{\infty}\sum\limits_{h=1}^{\infty}\omega(2^{-i})^{\frac{1}{4}}\omega(2^{-h})^{\frac{1}{4}}|
\lambda_{Q_{1,k_1}}||\lambda_{Q_{2,k_2}}|\\ \notag
&\quad\times\int_{(S^*)^c\cap\mathscr{R}_{1, k_1}^i\cap\mathscr{R}_{2, k_2}^h}\frac{|b_1(x)-b_{1,Q_{1,k_1}}|}{(2^il_{Q_{1,k_1}}2^hl_{Q_{2,k_2}})^{\frac{n}{2}}}\times\bigg(\iint_{(\reals^n)^2}
|b_2(x)-b_{2,Q_{1,k_1}}|\\ \notag
&\quad\times\frac{|a_{Q_{1,k_1}}(y_1)||a_{Q_{2,k_2}}(y_2)|}{(|x-y_1|+|x-y_2|)^{n}}\omega\bigg(\frac{|y_1-c_{1,k_1}|}{|x-y_1|+|x-y_2|}
\bigg)^{\frac{1}{2}}dy_1dy_2\bigg)dx.
  \end{align}
Let's first consider the inside integrals, by the H\"{o}lder inequality, we may have
\begin{align}\label{i41}
&\int_{(S^*)^c\cap\mathscr{R}_{1, k_1}^i\cap\mathscr{R}_{2, k_2}^h}\frac{|b_1(x)-b_{1,Q_{1,k_1}}|}{(2^il_{Q_{1,k_1}}2^hl_{Q_{2,k_2}})^{\frac{n}{2}}}\times\bigg(\iint_{(\reals^n)^2}
|b_2(x)-b_{2,Q_{1,k_1}}|\\ \notag
&\quad\times\frac{|a_{Q_{1,k_1}}(y_1)||a_{Q_{2,k_2}}(y_2)|}{(|x-y_1|+|x-y_2|)^{n}}\omega\big(\frac{|y_1-c_{1,k_1}|}{|x-y_1|+|x-y_2|}
\big)^{\frac{1}{2}}dy_1dy_2\bigg)dx\\ \notag
&\leq\bigg(\frac{1}{(2^hl_{Q_{2,k_2}})^{n}}\int_{\mathscr{R}_{2, k_2}^h}|b_1(x)-b_{1,Q_{1,k_1}}|^2dx\bigg)^{\frac{1}{2}}\\ \notag
&\quad\times\bigg(\frac{1}{(2^il_{Q_{1,k_1}})^{n}}\int_{(S^*)^c\cap\mathscr{R}_{1, k_1}^i}\bigg|\iint_{(\reals^n)^2}
|b_2(x)-b_{2,Q_{1,k_1}}|\end{align}\begin{align*}\quad\times\frac{|a_{Q_{1,k_1}}(y_1)||a_{Q_{2,k_2}}(y_2)|}{(|x-y_1|+|x-y_2|)^{n}}\omega\big(\frac{|y_1-c_{1,k_1}|}{|x-y_1|+|x-y_2|}
\big)^{\frac{1}{2}}dy_1dy_2\bigg|^2dx\bigg)^\frac{1}{2}
\end{align*}
Note that $a_{2,k_2}(y_2)\in L^1(\reals^n)$, similar argument as in $(\ref{eq8})$ yields that
\begin{align*}
(\ref{i41})&\leq h^{\frac{1}{2}}\norm{b_2}_*^{\frac{1}{2}}\bigg[\frac{1}{(2^il_{Q_{1,k_1}})^{n}}\int_{(S^*)^c\cap\mathscr{R}_{1, k_1}^i}\big|\int_{\reals^n}
|b_2(x)-b_{2,Q_{1,k_1}}|\\ \notag
&\quad\times\sup\limits_{y_1, y_2\in S}\bigg(\frac{1}{(|x-y_1|+|x-y_2|)^{n}}\omega\big(\frac{|y_1-c_{1,k_1}|}{|x-y_1|+|x-y_2|}
\big)^{\frac{1}{2}}\bigg)|a_{Q_{1,k_1}}(y_1)|dy_1\big|^2dx\bigg]^\frac{1}{2}.
\end{align*}
Note that the integrals in the above inequality are independent of $S_{2,l_2}$ and $Q_{2,k_2}$ and $\omega$ is doubling, similarly as what we have done with $(\ref{rel1})$, for fixed $x\in (S^*)^c$ and any $y_1, y_2\in S$, we have
\begin{align}\label{rel2}
 &\sup\limits_{y_1, y_2\in S}\bigg(\frac{1}{(|x-y_1|+|x-y_2|)^{n}}\omega\big(\frac{|y_1-c_{1,k_1}|}{|x-y_1|+|x-y_2|}
\big)^{\frac{1}{2}}\bigg)\\ \notag
 &\approx\frac{1}{(|x-y_1|+|x-y_2|)^{n}}\omega\big(\frac{|y_1-c_{1,k_1}|}{|x-y_1|+|x-y_2|}
\big)^{\frac{1}{2}}.
 \end{align}
 Recalling $(I)$ in Theorem $\ref{thm1}$ and putting the inequality above into $(\ref{i42})$, we may get
\begin{align*}
|E_{4,1}|&\leq\frac{CC_0}{\lambda}\sum \limits_{S_{1,l_1}}\sum \limits_{Q_{1,k_1}\subset S_{1,l_1}}\sum\limits_{i=1}^{\infty}\sum\limits_{h=1}^{\infty}\omega(2^{-i})^{\frac{1}{4}}
\omega(2^{-h})^{\frac{1}{4}}h^{\frac{1}{2}}|\lambda_{Q_{1,k_1}}|
\bigg(\frac{1}{(2^il_{Q_{1,k_1}})^{n}}
\\&\quad\times\int_{(S^*)^c\cap\mathscr{R}_{1, k_1}^i}\bigg|\int_{\reals^n}
|b_2(x)-b_{2,Q_{1,k_1}}|\big(\sum \limits_{S_{2,l_2}}\sum \limits_{Q_{2,k_2}\subset S_{2,l_2}}|\lambda_{Q_{2,k_2}}|\big)\\ \notag
&\quad\times\sup\limits_{y_1, y_2\in S}\bigg(\frac{1}{(|x-y_1|+|x-y_2|)^{n}}\omega\big(\frac{|y_1-c_{1,k_1}|}{|x-y_1|+|x-y_2|}
\big)^{\frac{1}{2}}\bigg)|a_{Q_{1,k_1}}(y_1)|dy_1\bigg|^2dx\bigg)^\frac{1}{2}\\
&\leq CC_0\gamma^{\frac{1}{2}}\lambda^{-\frac{1}{2}}\sum \limits_{S_{1,l_1}}\sum \limits_{Q_{1,k_1}\subset S_{1,l_1}}\sum\limits_{i=1}^{\infty}\sum\limits_{h=1}^{\infty}\omega(2^{-i})^{\frac{1}{4}}
\omega(2^{-h})^{\frac{1}{4}}h^{\frac{1}{2}}|\lambda_{Q_{1,k_1}}|
\bigg(\frac{1}{(2^il_{Q_{1,k_1}})^{n}}\\
&\quad\times\int_{(S^*)^c\cap\mathscr{R}_{1, k_1}^i}\bigg|\int_{\reals^n}
|b_2(x)-b_{2,Q_{1,k_1}}|\bigg(\sum \limits_{S_{2,l_2}}\int_{S_{2,l_2}}\frac{1}{(|x-y_1|+|x-y_2|)^{n}}\\
&\quad\times\omega\big(\frac{|y_1-c_{1,k_1}|}{|x-y_1|+|x-y_2|}
\big)^{\frac{1}{2}} dy_2\bigg)|a_{Q_{1,k_1}}(y_1)|dy_1\bigg|^2dx\bigg)^\frac{1}{2}\\
&\leq CC_0\gamma^{\frac{1}{2}}\lambda^{-\frac{1}{2}}\sum \limits_{S_{1,l_1}}\sum \limits_{Q_{1,k_1}\subset S_{1,l_1}}\sum\limits_{i=1}^{\infty}\sum\limits_{h=1}^{\infty}\omega(2^{-i})^{\frac{1}{4}}
\omega(2^{-h})^{\frac{1}{4}}h^{\frac{1}{2}}|\lambda_{Q_{1,k_1}}|
\\
&\quad\times\bigg(\frac{1}{(2^il_{Q_{1,k_1}})^{n}}\int_{(S^*)^c\cap\mathscr{R}_{1, k_1}^i}|b_2(x)-b_{2,Q_{1,k_1}}|^2\big(\int_{\reals^n}
|a_{Q_{1,k_1}}(y_1)|dy_1\big)^2dx\bigg)^\frac{1}{2}\end{align*}\begin{align*}&\leq CC_0\gamma^{\frac{1}{2}}\lambda^{-\frac{1}{2}}\sum\limits_{i=1}^{\infty}\sum\limits_{h=1}^{\infty}\omega(2^{-i})^{\frac{1}{4}}
\omega(2^{-h})^{\frac{1}{4}}h^{\frac{1}{2}}i^{\frac{1}{2}}\\
&\leq CC_0\gamma^{\frac{1}{2}}\lambda^{-\frac{1}{2}}.
\end{align*}
Now we begin with the estimate for $|E_{4,2}|$.\par
Recalling the definition of $I_{4,2}(x)$, the moment condition of $H^1$-atoms and smoothness condition $(\ref{k-H2s})$. Similar to the estimates in $(\ref{i42})$, we may obtain
 \begin{align}\label{iE42}
 |E_{4,2}|&\leq\frac{CC_0}{\lambda}\sum \limits_{S_{1,l_1}}\sum \limits_{Q_{1,k_1}\subset S_{1,l_1}}\sum \limits_{S_{2,l_2}}\sum \limits_{Q_{2,k_2}\subset S_{2,l_2}}\sum\limits_{i=1}^{\infty}|\lambda_{Q_{1,k_1}}||\lambda_{Q_{2,k_2}}|\int_{(S^*)^c\cap\mathscr{R}_{1, k_1}^i}\iint_{(\reals^n)^2}\\ \notag
&\quad|b_1(x)-b_{1,Q_{1,k_1}}||b_2(y_2)-b_{2,Q_{1,k_1}}|\frac{|a_{Q_{1,k_1}}(y_1)||a_{Q_{2,k_2}}(y_2)|}{|x-y_1|^{n}}\\ \notag
&\quad\times\frac{1}{(|x-y_1|+|x-y_2|)^{n}}\omega\big(\frac{|y_1-c_{1,k_1}|}{|x-y_1|+|x-y_2|}\big)dy_1dy_2dx.
  \end{align}
First, we consider the following summation.
  \begin{equation}\label{ieq4}
  \sum \limits_{S_{2,l_2}}\sum \limits_{Q_{2,k_2}\subset S_{2,l_2}}\int_{\reals^n}|b_2(y_2)-b_{2,Q_{1,k_1}}|\frac{|\lambda_{Q_{2,k_2}}||a_{Q_{2,k_2}}(y_2)|}{(|x-y_1|+|x-y_2|)^{n}}
  \omega\big(\frac{|y_1-c_{1,k_1}|}{|x-y_1|+|x-y_2|}\big)dy_2.
  \end{equation}
Property $(I)$ in Theorem $\ref{thm1}$, inequality $(\ref{rel2})$, and size condition of $H^1$-atoms, that is, $\norm{a_{Q_{2,k_2}}}_{L^{\infty}}\leq |Q_{2,k_2}|^{-1}$, together with the H\"{o}lder inequality, enable us to obtain
  \begin{align*}
(\ref{ieq4})&\leq\sum \limits_{S_{2,l_2}}\sum \limits_{Q_{2,k_2}\subset S_{2,l_2}}|\lambda_{Q_{2,k_2}}|\bigg(\int_{\reals^n}|b_2(y_2)-b_{2,Q_{1,k_1}}|^2|a_{Q_{2,k_2}}(y_2)|dy_2\bigg)^{\frac{1}{2}}\\
&\quad\times\bigg(\int_{\reals^n}\frac{1}{(|x-y_1|+|x-y_2|)^{2n}}
  \omega\big(\frac{|y_1-c_{1,k_1}|}{|x-y_1|+|x-y_2|}\big)^2|a_{Q_{2,k_2}}(y_2)|dy_2\bigg)^{\frac{1}{2}}\\
  &\leq\omega(2^{-i})\sum \limits_{S_{2,l_2}}\sum \limits_{Q_{2,k_2}\subset S_{2,l_2}}|\lambda_{Q_{2,k_2}}|\norm{b_2}_*^{\frac{1}{2}}\sup\limits_{y_1, y_2\in S}\bigg(\frac{1}{(|x-y_1|+|x-y_2|)^{n}}
  \\
  &\quad\times\omega\big(\frac{|y_1-c_{1,k_1}|}{|x-y_1|+|x-y_2|}\big)^{\frac{1}{2}}\bigg)\\
  &\leq C(\gamma\lambda)^{\frac{1}{2}}\omega(2^{-i})^{\frac{1}{2}}\sum \limits_{S_{2,l_2}}\int_{S_{2,l_2}}\frac{1}{(|x-y_1|+|x-y_2|)^{n}}
  \omega\big(\frac{|y_1-c_{1,k_1}|}{|x-y_1|+|x-y_2|}\big)^{\frac{1}{2}}dy_2\\
  &\leq C(\gamma\lambda)^{\frac{1}{2}}\omega(2^{-i})^{\frac{1}{2}}.
  \end{align*}
 Therefore, by $(\ref{iE42})$ and note that $a_{Q_{1,k_1}}(y_2)\in L^1(\reals^n)$, we have
  \begin{align*}\label{iE42}
 |E_{4,2}|&\leq CC_0\gamma^{\frac{1}{2}}\lambda^{-\frac{1}{2}}\sum \limits_{S_{1,l_1}}\sum \limits_{Q_{1,k_1}\subset S_{1,l_1}}\sum\limits_{i=1}^{\infty}\omega(2^{-i})^{\frac{1}{2}}|\lambda_{Q_{1,k_1}}|\int_{(S^*)^c\cap\mathscr{R}_{1, k_1}^i}\int_{\reals^n}\frac{1}{|x-y_1|^2}\\ \notag
&\quad\times|b_1(x)-b_{1,Q_{1,k_1}}||a_{Q_{1,k_1}}(y_1)|dy_1dx\\
&\leq CC_0\norm{b_1}_*\gamma^{\frac{1}{2}}\lambda^{-\frac{1}{2}}\sum \limits_{S_{1,l_1}}\sum \limits_{Q_{1,k_1}\subset S_{1,l_1}}|\lambda_{Q_{1,k_1}}|\sum\limits_{i=1}^{\infty}\omega(2^{-i})^{\frac{1}{2}}i^{\frac{1}{2}}\leq CC_0\gamma^{\frac{1}{2}}\lambda^{-\frac{1}{2}}.
  \end{align*}
Since $|E_{4,3}|$ is a symmetrical case of $|E_{4,2}|$ we may also obtain
  \[|E_{4,3}|\leq CC_0\gamma^{\frac{1}{2}}\lambda^{-\frac{1}{2}}.\]
 Similar argument still works as in $(\ref{eq5})$, we may have
   \[|E_{4,4}|\leq CC_2^{\frac{1}{2}}\lambda^{-\frac{1}{2}}.\]
This completes the estimate for $|E_4|$.
Thus, we have proved inequality (\ref{lasti2i}) and the proof of Theorem $\ref{thm1itrated}$ is finished.
\end{proof}

\end{document}